\documentclass[12pt]{article}

\usepackage[table]{xcolor}
\usepackage{amsmath, amsthm}
\usepackage{amsfonts,amssymb}
\usepackage{hyperref}
\usepackage{authblk}
\usepackage[all]{xy}
\usepackage{enumitem}
\usepackage{stmaryrd}
\usepackage[makeroom]{cancel}
\usepackage{bbm}
\usepackage{fancyvrb}
\usepackage[linesnumbered,lined,boxed]{algorithm2e}
\usepackage{xstring}
\usepackage{tikz}
\usetikzlibrary{intersections}

\renewcommand{\le}{\leqslant}
\renewcommand{\ge}{\geqslant}
\newcommand{\N}{\mathbb{N}}
\newcommand{\Z}{\mathbb{Z}}

\newcommand{\G}{\mathbb{G}}

\renewcommand{\P}{\mathcal{P}}

\renewcommand{\L}{\mathcal{L}}
\renewcommand{\O}{\mathcal{O}}

\newcommand{\Ld}{\L^\vee}

\newcommand{\Ker}{\operatorname{Ker}}

\newcommand{\rank}{\operatorname{rk}}

\newcommand{\ord}{\operatorname{ord}}

\newcommand{\Div}{\operatorname{Div}}
\newcommand{\Supp}{\operatorname{Supp}}
\newcommand{\Pic}{\operatorname{Pic}}
\newcommand{\Eff}{\operatorname{Eff}}

\renewcommand{\H}{\operatorname{H}}

\newcommand{\M}{\mathcal{M}}

\newcommand{\Set}[2]{\left\{ #1 \ \Big\vert \ #2 \right\}}

\newcommand{\torsor}{\times}
\newcommand{\Px}{\P^\torsor}
\newcommand{\Gm}{\G_m}
\newcommand{\Kx}{K^\torsor}
\newcommand{\dual}{\vee}
\newcommand{\Nm}{\mathcal{N}}
\newcommand{\defeq}{\overset{\text{def}}{=}}
\newcommand{\setmap}[4]{\begin{array}{rcl} #1 & \longrightarrow & #2 \\ #3 & \longmapsto & #4 \end{array}}
\newcommand{\EdC}{\Eff^\delta(C)}
\newcommand{\trans}{\top}
\renewcommand{\iff}{iff.\ }
\newcommand{\Lsymbol}{1}
\newcommand{\Rsymbol}{2}
\newcommand{\Ltimes}{\overset{\Lsymbol}{\otimes}}
\newcommand{\Rtimes}{\overset{\Rsymbol}{\otimes}}
\newcommand{\LTimes}{\bigotimes^\Lsymbol}
\newcommand{\RTimes}{\bigotimes^\Rsymbol}

\SetKw{True}{True}
\SetKw{False}{False}
\newcommand{\la}{\leftarrow}
\SetKw{FAIL}{FAIL}
\SetKw{Goto}{go to line}

\newcommand{\DivAdd}{\texttt{DivAdd}}
\newcommand{\DivSub}{\texttt{DivSub}}
\newcommand{\FnMul}{\texttt{SecMul}}
\newcommand{\AddFlip}{\texttt{AddFlip}}
\newcommand{\Negation}{\texttt{Negation}}
\newcommand{\MulFlip}{\texttt{MulFlip}}

\numberwithin{equation}{section}

\newtheorem{lem}[equation]{Lemma}

\theoremstyle{definition}
\newtheorem{de}[equation]{Definition}
\newtheorem{rk}[equation]{Remark}

\newtheorem{ex}[equation]{Example}

\usepackage{float}
\newfloat{Figure}{H}{Fig}[section]
\newfloat{Algorithm}{H}{Algo}[section]
\newfloat{Strategy}{H}{Strat}[section]
\newfloat{Table}{H}{Tab}[section]
\usepackage{fancybox,framed}

\makeatletter
\newcommand{\subjclass}[2][2020]{%
 \let\@oldtitle\@title%
 \gdef\@title{\@oldtitle\footnotetext{#1 \emph{Mathematics subject classification:} #2}}%
}
\newcommand{\keywords}[1]{%
 \let\@@oldtitle\@title%
 \gdef\@title{\@@oldtitle\footnotetext{\emph{Key words and phrases.} #1.}}%
}
\let\c@table\c@equation
\let\c@figure\c@equation
\makeatother
\makeatother

\title{Poincar\'e \`a la Makdisi}
\subjclass{
14H40, 
14Q05, 
14Q20. 
}
\author{Nicolas Mascot\thanks{\href{mailto:mascotn@tcd.ie}{mascotn@tcd.ie}}}
\affil{\scriptsize{Trinity College Dublin}}

\VerbatimFootnotes

\begin{document}

\maketitle

\begin{abstract}
We show how to extend Makdisi's algorithms to compute explicitly with the Poincar\'e torsor on the Jacobian of an algebraic curve.
\end{abstract}

\renewcommand{\abstractname}{Acknowledgements}
\begin{abstract}
The work presented here is motivated by the collaboration~\cite{QuadChab}. The author especially thanks Guido Lido for bringing the theory of the Poincar\'e torsor to the author's attention and suggesting to make it algorithmic, and Davide Lombardo for his insightful suggestions.
\end{abstract}

\textbf{Keywords:} Poincar\'e bundle, Jacobian, algorithm.


\section{Introduction}\label{sect:intro}

Let~$A$ be an Abelian variety over a field~$K$, and let~$A^\dual$ be its dual, so that~$A^\dual$ parametrises isomorphism classes of algebraically trivial line bundles over~$A$. The \emph{Poincar\'e bundle} on~$A$ is the unique line bundle~$\P$ on~$A \times A^\dual$ such that
\begin{itemize}
\item for all~$y = [\L] \in A^\dual$,~$\P_{\vert A \times \{ y \}} \simeq \L$,
\item and~$\P_{\vert 0 \times A^\dual} \simeq \O_{A^\dual}$ is trivial.
\end{itemize}

\begin{ex}\label{ex:poincare_ell}
If~$A=E$ is an elliptic curve, then~$E \simeq E^\dual$ via~$P \mapsto \O_E(P-O)$ where~$O \in E$ is the group identity, so we may identify~$E \times E^\dual$ with~$E \times E$. One then checks easily that~$\P = \O_{E \times E}(\Delta - E \times \{ O \} - \{ O \} \times E)$, where~$\Delta$ denotes the diagonal divisor on~$E \times E$.
\end{ex}

In this article, we restrict ourselves to the case where~$A=J$ is the Jacobian of a ``nice''\footnote{In this article, by ``nice'' we mean nonsingular, complete, and geometrically integral.} algebraic curve~$C$ defined over~$K$ such that~$C(K) \neq \emptyset$, so that we may identify~$J(K)$ with~$\Pic^0(C/K)$. Thanks to the canonical polarisation~$\pi : J^\dual \simeq J$, as in Example~\ref{ex:poincare_ell} we may identify~$\P$ with a line bundle on~$J \times J$, which is sometimes called the \emph{Mumford bundle}. It has the property that for all~$x \in J(K)$,~$\P_{\vert \{ x \} \times J} \simeq \P_{\vert J \times \{ x \} }$ represents the line bundle on~$J$ corresponding to~$x$ via~$\pi$, and it may be described more explicitly as follows. Let~$x=[D] \in J(K)$ be represented by a divisor~$D \in \Div^0(C)$, and similarly let~$y = [E] \in J(K)$. Then~\cite[Remark 6.3.12]{EdiLido} the stalk~$\P_{x,y}$ of~$\P$ at~$(x,y) \in J \times J$ is
\begin{equation} \P_{x,y} = \Nm_D\big(\O_C(E)\big). \label{eqn:descrP} \end{equation}
This description, along with the canonical isomorphisms
\[ \Nm_D\big(\O_C(E)\big) \otimes \Nm_{D'}\big(\O_C(E)\big) \simeq \Nm_{D+D'}\big(\O_C(E)\big) \]
and
\[ \Nm_D\big(\O_C(E)\big) \otimes \Nm_D\big(\O_C(E')\big) \simeq \Nm_D\big(\O_C(E+E')\big), \]
show the existence of commutative monoid laws
\begin{equation} \P_{x,y} \times \P_{x',y} \longrightarrow \P_{x+x',y} \label{eqn:Lmonoid} \end{equation}
and
\begin{equation} \P_{x,y} \times \P_{x,y'} \longrightarrow \P_{x,y+y'}. \label{eqn:Rmonoid} \end{equation}
These laws are called \emph{partial} laws because of the restriction on their domains, i.e. the condition that the~$y$- (respectively,~$x$-) coordinates of their arguments must agree.

In order to turn these monoid laws into group laws, we introduce the Poincar\'e \emph{torsor}~$\Px$, which is defined as~$\P$ with the~$0$ section removed. Thus, if we keep the same notations as in the previous paragraph and further assume that~$D$ and~$E$ are disjoint so that the rational section~$1$ of~$\O_C(E)$ is regular and non-vanishing around~$D$, then its norm~$\Nm_D\big( 1 \in \O_C(E) \big) \in \Px_{x,y}$ defines a point on the stalk of~$\Px$ at~$(x,y) \in J \times J$, and since the stalks of~$\Px$ are~$\Gm$-torsors, we have
\begin{equation}\Px_{x,y} = \Set{\lambda \cdot \Nm_D\big( 1 \in \O_C(E) \big)}{\lambda \in \Kx}. \label{eqn:Pxstalk} \end{equation}
Furthermore, the partial monoid laws~\eqref{eqn:Lmonoid} and~\eqref{eqn:Rmonoid} restrict to partial \emph{group} laws
\begin{equation} \Px_{x,y} \times \Px_{x',y} \longrightarrow \Px_{x+x',y} \label{eqn:Lgroup} \end{equation}
and
\begin{equation} \Px_{x,y} \times \Px_{x,y'} \longrightarrow \Px_{x,y+y'}, \label{eqn:Rgroup} \end{equation}
henceforth referred to respectively as the \emph{left} and \emph{right} partial group laws, and which make~$\Px$ the \emph{universal biextension} of~$J \times J$ by~$\Gm$, cf.~\cite[\textsection 2]{EdiLido}.

\paragraph*{Notation.} We will denote these partial group laws \emph{multiplicatively}, by
\[ \Ltimes : \Px_{x,y} \times \Px_{x',y} \longrightarrow \Px_{x+x',y} \]
and
\[ \Rtimes : \Px_{x,y} \times \Px_{x,y'} \longrightarrow \Px_{x,y+y'}, \]
the numbers~$1$ and~$2$ referring to the factor of~$J \times J$ on which the addition takes place in~$J$. Given a point~$\pi \in \Px_{x,y}$ and an integer~$n \in \Z$, we will write~$\pi^{\Ltimes n} \in \Px_{nx,y}$ (respectively,~$\pi^{\Rtimes n} \in \Px_{x,ny}$) for the exponentiation of~$\pi$ to the~$n$ according to the left (respectively, right) partial group law; in particular, we have the inverses~$\pi^{\Ltimes -1} \in \Px_{-x,y}$ and~$\pi^{\Rtimes -1} \in \Px_{x,-y}$, which are such that~$\pi \Ltimes \pi^{\Ltimes -1} \in \Px_{0,y}$ corresponds to~$1 \in \Kx$ under the canonical trivialisation~$\Px_{0,y} \simeq \O^\torsor_{J,y} \simeq \Kx$, and similarly for~$\pi \Rtimes \pi^{\Rtimes -1} \in \Px_{x,0} \simeq \O^\torsor_{J,x} \simeq \Kx$. In contrast, given~$\pi \in \Px_{x,y}$ and a scalar~$\lambda$ in~$\Kx$, we will write
\[ \lambda \cdot \pi \in \Px_{x,y} \]
for the~$\Gm$-torsor action; thus for example~$(\lambda \cdot \pi)^{\Ltimes n} = \lambda^n \cdot \pi^{\Ltimes n}$ and~$(\lambda \cdot \pi)^{\Rtimes n} = \lambda^n \cdot \pi^{\Rtimes n}$. We hope that these notations offer an acceptable compromise between compactness and the need to tell apart, for example, left exponentiation, right exponentiation, and the~$\Gm$-torsor action.

\begin{ex}\label{ex:Lgroup} Let~$D$,~$D'$, and~$E \in \Div^0(C)$ represent points~$x$,~$x'$, and~$y \in J(K)$ respectively and be such that~$D$ and~$D'$ are disjoint from~$E$. We may apply the left partial group law to~$\pi = \Nm_D\big( 1 \in \O_C(E) \big) \in \Px_{x,y}$ and~$\pi' = \Nm_{D'}\big( 1 \in \O_C(E) \big) \in \Px_{x',y}$ to obtain an element of~$\pi \Ltimes \pi' \in \Px_{x+x',y}$ which, in view of~\eqref{eqn:Pxstalk}, will be of the form~$\lambda \cdot \Nm_{D''}\big( 1 \in \O_C(E) \big)$ for a~$\lambda \in \Kx$ which will be uniquely determined as soon as we have chosen a divisor~$D'' \sim D+D'$ (and disjoint from~$E$) to represent~$x+x' \in J(K)$. This raises the question of how to calculate~$\lambda$ in terms of~$D$,~$D'$,~$E$, and~$D''$; the goal of this article is to address such questions algorithmically.
\end{ex}


More specifically, the purpose of this article is to explain how elements of (the stalks of)~$\Px$ can be represented efficiently on a computer, and to describe algorithms implementing the left and right partial group laws. Since the left (respectively, right) partial group law takes the stalks at~$x$ and~$x'$ to the stalk at~$x+x'$ (respectively,~$y$ and~$y'$ to~$y+y'$), these laws extend, in a sense, the group law on~$J$; therefore, making them algorithmic requires one to make the group law of~$J$ algorithmic in the first place (so that we may, for example, compute~$D''$ in terms of~$D$ and~$D'$ in Example~\ref{ex:Lgroup}). Makdisi introduced in~\cite{Mak1},~\cite{Mak2} very nice and efficient algorithms to compute the group law of~$J$; and in this article, we will show how to extend these algorithms to~$\Px$.

This article is organised as follows. We begin by establishing a comparison formula for points in the same fibre of~$\Px$ in Section~\ref{sect:compare}. After this, we recall in Section~\ref{sect:rep} how points of~$J$ are represented in Makdisi's algorithms, and explain how to extend this representation to encode points of~$\Px$. After this, we review Makdisi's algorithms for the group law of~$J$ in Section~\ref{sect:Laws}, and we show how to turn them into algorithms for the partial group laws of~$\Px$, including variants such as fast exponentiation and bilinear combinations.  Finally, we show in Section~\ref{sect:naive} how to convert into Makdisi format a point of~$\Px$ specified in terms of divisors expressed as sums of points of~$C$.

All the algorithms presented in this article have been implemented and thoroughly tested by the author as part of the collaboration~\cite{QuadChab}. Similarly to Makdisi's algorithms, they rely on linear algebra of size~$O(g)$, so that, in particular, their complexity is~$O(g^\omega)$ operations in~$K$, where~$\omega \le 3$ denotes the exponent complexity of linear algebra. They are based on formulas (such as~\eqref{eq:Poincare_compare_0} below) whose form is easy to guess, but whose precise derivation is often tedious and sometimes tricky; we will therefore show in detail how these formulas are obtained and used. 

\section{Comparing points on~$\Px_{x,y}$}\label{sect:compare}

Let us fix two points~$x,y \in J(K)$. In~\eqref{eqn:Pxstalk}, in order to understand the stalk~$\Px_{x,y}$ by relying on the~$\Gm$-torsor structure, we picked divisors~$D,E \in \Div^0(C)$ (with non-intersecting supports) representing respectively~$x$ and~$y$ in order to construct a ``base point''~$\Nm_D\big( 1 \in \O_C(E) \big)$, the notation~$1 \in \O_C(E)$ meaning~$1$ is considered as a section of~$\O_C(E)$. Obviously, this base point is not canonical, since the choices of~$D$ and~$E$ such that~$[D]=x$ and~$[E]=y$ are not. Our purpose of this short section is thus to elucidate the dependency of this base point on~$D$ and~$E$.

First of all, in order to declutter notation, whenever~$D$ and~$E$ are divisors of degree 0 on~$C$ with non-intersecting support, from now on we will denote the corresponding ``base point'' by
\begin{equation} (D,E) \defeq \Nm_D\big( 1 \in \O_C(E) \big). \label{eqn:(DE)} \end{equation}
In order not to have to repeat the condition on the disjointness of the supports of our divisors, we adopt the following ``golden rule'' for the rest of this article:
\begin{equation}
\boxed{\text{The~$D$'s are disjoint from the~$E$'s.}} \label{eqn:PGoldRule} 
\end{equation}
This means that any divisor called~$D,D',D_3$, etc. will be assumed to have its support disjoint from any divisor called~$E,E'',E_0$, etc.

Our task is thus to understand the connection between~$(D,E)$ and~$(D',E')$ when~$D' \sim D$ and~$E' \sim E$. More specifically, since~$(D',E')$ then represents a point in the same stalk~$\Px_{x,y}$ as~$(D,E)$, there exists a unique~$\lambda \in \Kx$ such that~$(D',E') = \lambda \cdot (D,E)$, and we want to make this~$\lambda$ explicit.

We start by looking at what happens when one of the divisors moves in its linear equivalence class while the other one is fixed.

\begin{lem}\label{lem:Pppal}
Let~$D,E \in \Div^0(C)$ be two divisors of degree~$0$, and~$f$ a rational function on~$C$. Furthermore, write~$s \in \O_C(E)$ to mean that a section~$s$ is understood as a meromorphic section of~$\O_C(E)$, and denote by~$(s)_E$ its divisor as a section of~$\O_C(E)$. Then under the identification~\eqref{eqn:descrP}, we have
\begin{enumerate}
\item~$\Nm_{D+(f)}(s \in \O_C(E)) = f((s)_E) \cdot \Nm_{D}(s \in \O_C(E)).$\label{lem:Pppal:D}
\item~$\Nm_D(s \in \O_C(E+(f))) = f(D) \cdot \Nm_D(s \in \O_C(E))$.\label{lem:Pppal:E}
\end{enumerate}
\end{lem}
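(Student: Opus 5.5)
We work directly from the definition of the norm functor on divisors. For a divisor $D=\sum_P n_P P$ and a line bundle $\L$ on $C$, recall that $\Nm_D(\L)=\bigotimes_P \L_P^{\otimes n_P}$, where $\L_P$ denotes the fibre of $\L$ at $P$. For a section $s$ regular and non-vanishing on $\Supp D$, we have $\Nm_D(s)=\bigotimes_P s(P)^{\otimes n_P}$. Both statements are then comparisons of two elements of the same $\Gm$-torsor, computed point by point. The subtlety lies in what the identification \eqref{eqn:descrP} means when $D$ is replaced by a linearly equivalent divisor. Here $\Nm_D(\L)$ and $\Nm_{D+(f)}(\L)$ are only canonically isomorphic when $\deg \L=0$, and that isomorphism is exactly what must be made explicit.

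For \ref{lem:Pppal:E}, note that $\O_C(E+(f))$ and $\O_C(E)$ are related by the isomorphism $\O_C(E+(f))\to\O_C(E)$, $t\mapsto ft$. This is the isomorphism by which both represent the same point $y$, once we use the convention that $\O_C(E)$ consists of functions $g$ with $(g)+E\ge 0$. Under it, the section $s$ of $\O_C(E+(f))$ maps to the section $fs$ of $\O_C(E)$. By the golden rule \eqref{eqn:PGoldRule}, $f$ is regular and non-vanishing on $\Supp D$, so multiplicativity of the norm gives $\Nm_D(fs)=\prod_P f(P)^{n_P}\cdot\Nm_D(s)=f(D)\cdot\Nm_D(s)$. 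One sign convention needs checking here: whether the identification goes via $f$ or $f^{-1}$. I would fix it by requiring that the result agree with the symmetric expectation of Example~\ref{ex:poincare_ell}.

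For \ref{lem:Pppal:D}, the isomorphism $\Nm_D(\L)\simeq\Nm_{D+(f)}(\L)$ for $\deg\L=0$ is the canonical trivialisation $\Nm_{(f)}(\L)\simeq \O$. It sends $\Nm_{(f)}(t)$ to $t$-independent data only after correcting by $f((t)_\L)$, by Weil reciprocity. Concretely, define the trivialisation by $\Nm_{(f)}(t)\mapsto f\big((t)_\L\big)$. This is well defined, because replacing $t$ by $gt$ multiplies the left side by $g((f))$ and the right side by $f((g))$, and these agree by Weil reciprocity. Here $\deg \L=0$ ensures that $f((t)_\L)$ is unaffected by scaling $f$. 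Taking $t=s$ and using $\Nm_{D+(f)}(s)=\Nm_D(s)\otimes\Nm_{(f)}(s)$ then gives the formula $f((s)_E)\cdot\Nm_D(s)$.

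The main obstacle is justifying that this Weil-reciprocity trivialisation is really the one implicit in \eqref{eqn:descrP}. This is the identification from \cite{EdiLido} that makes $\P$ well defined on $J\times J$. I would cite \cite[\S 6]{EdiLido} for it rather than reprove it, and I would pin down orientation conventions ($f$ versus $f^{-1}$) by checking compatibility with the symmetry $(D,E)\leftrightarrow(E,D)$.
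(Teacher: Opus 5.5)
Your proposal is correct and follows the paper's proof. Part~2 is exactly the functoriality of $\Nm_D$ under the isomorphism $\O_C(E+(f))\to\O_C(E)$, $t\mapsto ft$. Part~1 is the Weil-reciprocity trivialisation of $\Nm_{(f)}(\L)$ for $\deg\L=0$, which the paper simply cites as \cite[Lemma 6.4.8]{EdiLido}, just as you defer to \cite{EdiLido} for its compatibility with \eqref{eqn:descrP}.

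Your hedge about $f$ versus $f^{-1}$ in part~2, and the proposal to settle it using Example~\ref{ex:poincare_ell}, is unnecessary. Any isomorphism $\O_C(E+(f))\to\O_C(E)$ is forced to be multiplication by $cf$ with $c\in\Kx$, and the constant disappears because $\Nm_D(c)=c^{\deg D}=1$.
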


\begin{proof}
This follows from the properties of the~$\Nm$ functor. More specifically, part~\ref{lem:Pppal:D} is~\cite[Lemma 6.4.8]{EdiLido}, and part~\ref{lem:Pppal:E} follows from the functoriality of~$\Nm_D$.
\end{proof}

We deduce the following fundamental formula:

\begin{lem}\label{lem:Poincare_compare_0}
Let~$D_1,D_2,E_1,E_2 \in \Div^0(C)$ be divisors of degree~$0$ satisfying~\eqref{eqn:PGoldRule}, so that we have the two points~$(D_1,E_1)$ and~$(D_2,E_2)$ of the Poincar\'e torsor. Suppose furthermore that~$D_1 \sim D_2$ and~$E_1 \sim E_2$, so that these two sections actually only differ by a scalar, and that we know rational functions~$d$ and~$e \in K(C)^\torsor$ such that~$D_2=D_1+(d)$ and~$E_2=E_1+(e)$. Then the unique~$\lambda \in K^\times$ such that~$(D_2,E_2) = \lambda \cdot(D_1,E_1)$ is
\begin{equation} \lambda = d(E_2) \cdot e(D_1) = d(E_1) \cdot e(D_2). \label{eq:Poincare_compare_0} \end{equation}
\end{lem}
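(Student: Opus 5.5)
We will pass from $(D_1,E_1)$ to $(D_2,E_2)$ in two steps, moving one divisor at a time and applying Lemma~\ref{lem:Pppal} at each step. For the first equality we go through $(D_1,E_2)$, and for the second through $(D_2,E_1)$. The intermediate points are admissible base points: by the golden rule~\eqref{eqn:PGoldRule}, $D_1$ is disjoint from $E_2$ and $D_2$ from $E_1$.

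First path. Start from $(D_1,E_1)=\Nm_{D_1}(1\in\O_C(E_1))$. Since $E_2=E_1+(e)$, part~\ref{lem:Pppal:E} of Lemma~\ref{lem:Pppal} with $s=1$ gives $(D_1,E_2)=\Nm_{D_1}(1\in\O_C(E_1+(e)))=e(D_1)\cdot(D_1,E_1)$. Next, keep $E_2$ fixed and replace $D_1$ by $D_2=D_1+(d)$. Part~\ref{lem:Pppal:D} gives $\Nm_{D_2}(1\in\O_C(E_2))=d\big((1)_{E_2}\big)\cdot\Nm_{D_1}(1\in\O_C(E_2))$. The key point is that the divisor of the rational section $1$ of $\O_C(E_2)$ is exactly $E_2$, because $\O_C(E_2)$ is the sheaf of functions $f$ with $(f)+E_2\ge0$, so $(1)_{E_2}=E_2$. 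Hence $(D_2,E_2)=d(E_2)\cdot(D_1,E_2)=d(E_2)\,e(D_1)\cdot(D_1,E_1)$, which is the first expression for $\lambda$.

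Second path. First apply part~\ref{lem:Pppal:D} with $E_1$ fixed to get $(D_2,E_1)=d(E_1)\cdot(D_1,E_1)$. Then apply part~\ref{lem:Pppal:E} with $D_2$ fixed to get $(D_2,E_2)=e(D_2)\cdot(D_2,E_1)$. This yields $\lambda=d(E_1)\,e(D_2)$. Since $\lambda$ is unique (the stalk is a $\Gm$-torsor), the two expressions agree. As a consistency check, their equality is precisely Weil reciprocity: $d((e))=e((d))$ because $d(E_2)/d(E_1)=d((e))$ and $e(D_2)/e(D_1)=e((d))$.

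The main obstacle is making sure the evaluations $f(\cdot)$ are defined and that Lemma~\ref{lem:Pppal} applies verbatim. Every function must be evaluated on a divisor disjoint from its zeros and poles. In $d(E_2)$ and $d(E_1)$, the divisor $(d)=D_2-D_1$ is supported on the $D$'s, which are disjoint from the $E$'s. Symmetrically, $e(D_1)$ and $e(D_2)$ are defined. We also need to check the convention that $(s)_E$ for $s=1$ is $E$ rather than $-E$. With the convention $\O_C(E)=\{f:(f)+E\ge0\}$, a section $s$ has divisor $(s)+E$, so $(1)_E=E$, as used above.
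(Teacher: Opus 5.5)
Your proposal is correct and is essentially the paper's proof: two applications of Lemma~\ref{lem:Pppal}, passing through $(D_1,E_2)$ for the first formula and through $(D_2,E_1)$ for the second, exactly as the paper does. Your extra checks — that the intermediate points are well defined, that $(1)_{E}=E$, and the Weil-reciprocity consistency check (which the paper records as a separate remark) — are all correct.
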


\begin{proof}
Applying both statements of Lemma~\ref{lem:Pppal}, we compute that
 \begin{align*}
 (D_2,E_2) \defeq& \Nm_{D_2}(1 \in \O_C(E_2)) \\
 =& \Nm_{D_1+(d)}(1 \in \O_C(E_2)) \\
 =& d(E_2) \cdot \Nm_{D_1}(1 \in \O_C(E_2)) \\
 =& d(E_2) \cdot \Nm_{D_1}(1 \in \O_C(E_1+(e))) \\
 =& d(E_2) \cdot e(D_1) \cdot \Nm_{D_1}(1 \in \O_C(E_1)), \\
 \end{align*}
 so that~$\lambda = d(E_2) \cdot e(D_1)$.
 
 A similar calculation handling the~$E$'s before the~$D$'s shows that
 \begin{align*}
 (D_2,E_2) \defeq& \Nm_{D_2}(1 \in \O_C(E_2)) \\
 =& \Nm_{D_2}(1 \in \O_C(E_1+(e))) \\
 =& e(D_2) \cdot \Nm_{D_2}(1 \in \O_C(E_1)) \\
 =& e(D_2) \cdot \Nm_{D_1+(d)}(1 \in \O_C(E_1)) \\
 =& e(D_2) \cdot d(E_1) \cdot \Nm_{D_1}(1 \in \O_C(E_1)), \\
 \end{align*}
 so that~$\lambda = d(E_1) \cdot e(D_2)$ as well.
\end{proof}

\begin{rk}
 The fact that~$d(E_2) \cdot e(D_1)=d(E_1) \cdot e(D_2)$ is actually an immediate consequence of Weil reciprocity.
\end{rk}

\section{Representing points on~$J$ and on~$\Px_{x,y}$}\label{sect:rep}

\subsection{Representing points on~$J$: review of Makdisi's algorithms}\label{sect:MakFramework}

Before we describe algorithms for~$\Px$, let us review Makdisi's algorithms~\cite{Mak1,Mak2} for Jacobians.

\subsubsection{Makdisi's framework}

We begin by picking a line bundle~$\L / C$ whose degree
\[ \delta \defeq \deg \L \]
satisfies
\begin{equation}
\delta \ge 2g+1,
\label{eqn:bound_delta}
\end{equation}
where~$g$ is the genus of~$C$. Makdisi's algorithms rely on performing linear algebra on subspaces of the global section spaces
\[ V_n \defeq \H^0(C,\L^{\otimes n}) \]
of the powers of~$\L$ for~$n=1, \cdots, 5$. When~$0 \neq s \in V_n$ is a nonzero section of~$\L^{\otimes n}$, we will write~$(s)_n$ for its divisor, which is an effective divisor of degree~$n \delta$. Including the subscript~$n$ in this notation will help clarify the calculations that underlie our algorithms for~$\Px$, and also provides useful disambiguation when~$s$ could be understood as a element of~$V_n$ for several values of~$n$, e.g. if~$\L$ has been chosen to be a sheaf of rational functions on~$C$. When~$D \in \Eff(C)$ is an effective divisor, we will also write
\[ V_n(-D) \defeq \H^0\big(C,\L^{\otimes n}(-D)\big) \subseteq V_n. \]

Next, in order to represent subspaces of~$V_n$ on a computer, we also fix points
\[ Z_1, \cdots, Z_{n_Z} \in C(K) \]
on~$C$, along with a local trivialisation of~$\L$ at each of the~$Z_i$. Thanks to these trivialisations, it makes sense to talk about the value of a section~$s \in V_n$ at a point~$Z_i$; we will denote this value simply by~$s(Z_i)$, without explicitly referencing the local trivialisation at~$Z_i$ so as not to clutter our notations. We require that the number~$n_Z$ of points that we pick satisfy
\begin{equation}
n_Z > 5 \delta; \label{eqn:bound_nZ}
\end{equation}
this ensures that the linear maps
\begin{equation}
\setmap{V_n}{K^{n_Z}}{s}{c_s \defeq \big(s(Z_1),\cdots,s(Z_{n_Z}) \big)}
\label{eqn:embedVn}
\end{equation}
are embeddings, since we are limiting ourselves to~$n \le 5$.

In what follows, we will think of the spaces~$V_n$,~$n \le 5$ as implicitly embedded into~$K^{n_Z}$ via~\eqref{eqn:embedVn}; however, given a section~$s \in V_n$, we will write~$c_s$ for the column vector~$\big(s(Z_1),\cdots,s(Z_{n_Z})\big) \in K^{n_Z}$ corresponding to~$s$ when we want to insist on the distinction between the mathematical object~$s$ and the computer data~$c_s$ handled by our algorithms.

\begin{rk}\label{rk:EnoughPts}
If~$C(K)$ is too small for sufficiently many points~$Z_i$ to be found easily, we can simply replace~$K$ with a finite extension~$K'$ such that~$C(K')$ is large enough; in this case, we can still compute in~$J(K)$ by working with data over~$K'$ (which will presumably be slower than over~$K$). We mention that there are variants of Makdisi's algorithms which do not even require~$C(K)$ to be non-empty, cf.~\cite[Section 2]{Mak2}.
\end{rk}

\begin{rk}\label{rk:EnoughPtsDisjoint}
We also note that for our implementation of~$\Px$ to be efficient, we actually require~$K$ to be large enough that the divisor of two random elements of~$V_n$,~$n \le 5$, be disjoint with very high probability. In theory, this too could force us to enlarge~$K$ if~$K$ is a finite field. We refer to~\cite{Bruin} for a detailed analysis of this probability which we will not carry out here; we will instead content ourselves with noting that in practice, this condition seems to be satisfied as soon as~$C(K)$ is large enough in the sense of Remark~\ref{rk:EnoughPts}.
\end{rk}


\subsubsection{Representing divisors}\label{sect:Mak_RepDivs}

We represent an effective divisor~$D \in \Eff(C)$ by the subspace~$V_n(-D)$ of~$V_n$ for some~$n \le 5$ large enough in comparison with~$g$ and~$\deg D$ for Riemann-Roch to guarantee that this representation is faithful.
We then represent this subspace, or more generally any subspace~$W \subseteq V_n$, by the~$n_Z \times d$ matrix
\[ M_W \defeq \begin{pmatrix} w_1(Z_1) & \cdots & w_d(Z_1) \\ \vdots && \vdots \\ w_1(Z_{n_Z}) & \cdots & w_d(Z_{n_Z}) \end{pmatrix} \]
where~$d = \dim W$ and~$w_1,\cdots,w_d$ is any~$K$-basis of~$W$.

We will occasionally need to represent~$W$ alternatively by an~$(n_Z-d)\times n_Z$ matrix~$L_W$ whose rows are linear equations describing the image of~$W$ by~\eqref{eqn:embedVn} as a subspace of~$K^{n_Z}$; converting one representation of~$W$ into the other is straightforward.

\begin{rk}
We will see that Makdisi's algorithms ultimately rely on linear algebra of size~$O(n_Z) \times O(d_0)$ (cf.\ for example the matrices~$M_W$ above). By taking~$d_0$ and~$n_Z$ to be both~$O(g)$ (but still satisfying~\eqref{eqn:bound_delta} and~\eqref{eqn:bound_nZ}, of course), we see that their complexity is therefore~$O(g^\omega$), where~$\omega \le 3$ is an exponent for linear algebra complexity. The extensions of these algorithms to~$\Px$ that we will present in this article retain this property.
\end{rk}

\subsubsection{Representing points of~$J$}\label{sect:MakRepPt}

We can then represent any point~$x \in J(K)$ as follows. Riemann-Roch and~\eqref{eqn:bound_delta} ensure that~$x$ can be represented as~$x = [\Ld(D)]$ for some effective divisor~$D \in \EdC$ of degree~$\delta$, where~$\Ld$ denotes the dual of~$\L$; we faithfully encode this divisor~$D$ as the subspace~$V_2(-D)$ of~$V_2$, which we in turn represent by a matrix~$M_{V_2(-D)}$. In order to declutter notations, we write~$M_D \defeq M_{V_2(-D)}$.

We observe that this representation of~$x$ is doubly non-unique, as it relies on the choice of~$D$ in its linear equivalence class and then on the choice of a basis of~$V_2(-D)$ to form the matrix~$M_D$.

In particular, the identity~$0 \in J(K)$ may be represented by~$M_D$ for any~$D \in \EdC$ of the form~$D=(s)_1$ for some~$0 \neq s \in V_1$. Note that the corresponding subspace~$V_2(-D)$ is then simply~$V_2(-D) = s \cdot V_1$.

\subsubsection{Initialising Makdisi's algorithms}\label{sect:MakInit}

Once we have made a choice of line bundle~$\L$ and of points~$Z_1,\cdots,Z_{n_Z}$, it is useful to initialise data encoding~$J$ by precomputing representations~$M_{V_n}$ and~$L_{V_n}$ of the section spaces~$V_n$ for all~$1 \le n \le 5$, since these are required to execute Makdisi's algorithms. For this, it is actually enough to compute~$M_{V_1}$, since we can then obtain~$M_{V_n}$ for higher~$n$ by applying repeatedly the~$\DivAdd$ routine introduced below, and then easily deduce the~$L_{V_n}$ from the~$M_{V_n}$. We give two examples showing how~$M_{V_1}$ can be computed explicitly in different settings.

\begin{ex}\label{ex:OC(D)}
Suppose that we have picked~$\L = \O_C(D_0)$ for some effective divisor~$D_0 \in \EdC$ of degree~$\delta \ge 2g+1$, which is a reasonable choice when~$C$ is given to us by a plane equation for instance. It is then a good idea to pick the points~$Z_1,\cdots,Z_n$ away from the support of~$D_0$, so as not to require nontrivial trivialisations of~$\L$ at the~$Z_i$. Then~$M_{V_1}$ is obtained by computing a basis of the Riemann-Roch space of~$D$, and by evaluating it at the~$Z_i$.
\end{ex}

\begin{ex}\label{ex:Modular}
Suppose that~$C=X(\Gamma)$ is a modular curve attached to a congruence subgroup~$\Gamma$ of level~$N \in \N$. It is then natural to pick a line bundle whose sections are modular forms of level~$\Gamma$. More specifically, since we need~$\deg \L \ge 2g+1$, we can take~$\L$ whose sections are~$V_1 = M_2(\Gamma)$ whenever~$C$ has at least 3 cusps; if~$C$ has fewer cusps, we instead take~$\L$ so that~$V_1=M_k(\Gamma)$ for large enough weight~$k \in \N$.

We then pick non-cuspidal points~$Z_i \in C(K)$ represented by an elliptic curve given by a Weierstrass equation and equipped with a~$\Gamma$-level structure. The choice of a Weierstrass equation determines a normalisation of the differential on the elliptic curve and thus a local trivialisation of~$\L$ at the point, by viewing the sections of~$\L$ as Katz modular forms.

In order to compute~$M_{V_1}$, it remains to explicitly evaluate a basis of~$V_1$ at such points. Assuming that very minor hypotheses are satisfied, Makdisi shows in~\cite{ModuliFriendly} that for fixed vectors~$v,w \in (\Z/N\Z)^2$,
\[ \lambda_{v,w} : \big( E, \ \beta : (\Z/N\Z)^2 \overset{\sim}{\rightarrow} E[N] \big) \longmapsto \begin{array}{c} \text{Slope of the line} \\ \text{joining~$\beta(v)$ to~$\beta(w)$} \\ \text{on the Weierstrass model} \end{array} \]
is a modular form of weight~$1$ and level~$\Gamma(N)$, and that every modular form of level~$\Gamma$ and weight~$\ge 2$ can be expressed as an isobaric polynomial in the~$\lambda_{v,w}$ for various~$v,w \in (\Z/N\Z)^2$. This beautiful result thus makes it possible to compute~$M_{V_1}$ explicitly and thus to construct a model of~$J$ without relying on plane equations nor on~$q$-expansions; see~\cite{MakMod} for full details in the case~$\Gamma_1(N) \le \Gamma \le \Gamma_0(N)$.
\end{ex}

\subsubsection{The three basic routines}

The elegance of Makdisi's algorithms is that they manage to reduce the problem of computing in~$J(K)$ to linear algebra on matrices such as~$M_D$ encoding subspaces of the~$V_n$. The following three routines serve as basic building blocks in these algorithms.

\begin{itemize}
\item Observe that under~\eqref{eqn:embedVn}, multiplication of sections corresponds to coordinatewise multiplication in~$K^{n_Z}$. Therefore, given a column vector in~$c_s \in K^{n_Z}$ representing a section~$s \in V_{n}$ and a matrix~$M_{V_{n'}(-D)}$ encoding an effective divisor~$D$, if~$n+n' \le 5$, then by multiplying coordinatewise each column of~$M_{V_n'(-D)}$ by~$c_s$, we obtain a matrix~$M_{V_{n+n'}(-(s)_n-D)}$ encoding~$(s)_n+D$. We call this routine~$\FnMul$.
\item Whenever~$n_1, n_2 \in \N$ satisfy~$n_1+n_2 \le 5$ and~$D_1, D_2 \in \Eff(C)$ are such that
\begin{equation}
\begin{array}{c}
\text{Both } n_1 \delta - \deg D_1 = \deg \L^{\otimes n_1}(-D_1) \\
\text{and } n_2 \delta - \deg D_2 = \deg \L^{\otimes n_2}(-D_2) \\
\text{ are } \ge 2g+1,
\label{eqn:condition_DivAdd}
\end{array}
\end{equation}
the multiplication map
\[ V_{n_1}(-D_1) \otimes_K V_{n_2}(-D_2) \longrightarrow V_{n_1+n_2}(-D_1-D_2) \]
is surjective~\cite[Lemma 2.2]{Mak1}. As the target has known dimension by Riemann-Roch, it is thus easy to compute a representation~$M_{ V_{n_1+n_2}(-D_1-D_2)}$ of~$ V_{n_1+n_2}(-D_1-D_2)$ from~$M_{V_{n_1}(-D_1)}$ and~$M_{V_{n_2}(-D_2)}$ when~\eqref{eqn:condition_DivAdd} is satisfied; we call this routine
\[ \DivAdd : \left(M_{V_{n_1}(-D_1)}, M_{V_{n_2}(-D_2)} \right) \longmapsto M_{ V_{n_1+n_2}(-D_1-D_2)}, \]
since it corresponds to the addition of the divisors~$D_1$ and~$D_2$ under the encoding~$D \mapsto M_{V_n(-D)}$.

\item Similarly, we have a routine 
\[ \DivSub : \left( M_{V_{n+n'}(-D_1-D_2)}, M_{V_{n}(-D_1)} \right) \longmapsto M_{V_{n'}(-D_2)} \]
effecting subtraction of divisors, where as usual~$n+n' \le 5$,~$D_1, D_2 \in \Eff(C)$, and under the extra condition that~$n\delta-\deg D_1 \ge 2g$ so that~$V_n(-D_1)$ is basepoint-free in the sense that
\begin{equation}
\inf_{\substack{s \in V_n(-D_1) \\ s \neq 0}} (s)_n = D_1. \label{eqn:bpfree}
\end{equation}
In order to explain how it works, embed everything into~$K^{n_Z}$ by~\eqref{eqn:embedVn}, and let~$(t_i)_i$ be the~$K$-basis of~$V_n(-D_1)$ represented by the columns of~$M_{V_{n}(-D_1)}$; then by~\eqref{eqn:bpfree},
\begin{align*}
V_{n'}(-D_2) &= \Set{s \in V_{n'} }{ s \cdot V_n(-D_1) \subseteq V_{n+n'}(-D_1-D_2)} \\
&= \Set{ c_s \in K^{n_Z}}{s \in V_{n'} \text{ and } \forall i, \ c_s \odot c_{t_i} \subseteq V_{n+n'}(-D_1-D_2)}
\end{align*}
where~$\odot$ denotes coordinatewise multiplication in~$K^{n_Z}$. Therefore~$V_{n'}(-D_2)$ can be recovered as the solution space of the linear system obtained by stacking the linear equations contained in~$L_{V_{n'}}$ with those obtained, for each~$i$, by multiplying the~$j$-th column of~$L_{V_{n+n'}(-D_1-D_2)}$ by~$t_i(Z_j)$ for all~$j$, where the~$L$ matrices are as in Section~\ref{sect:Mak_RepDivs}. Indeed, for each~$i$, doing so turns the linear system~$L_{V_{n+n'}(-D_1-D_2)}$ whose solution space is~$V_{n+n'}(-D_1-D_2)$ into a linear system whose solution space is~$\Set{ c_v \in K^{n_Z}}{v \cdot t_i \in V_{n+n'}(-D_1-D_2)}$.
\end{itemize}

\subsubsection{First application: equality test and zero test}


We defer the presentation of the algorithm for the group law of~$J(K)$ to section~\ref{sect:Laws} below, as we will show there how to extend it to compute the partial group laws of~$\Px$; but in order to whet the reader's appetite, we now present an algorithm that circumvents the disadvantage of the representation of points of~$J$ by matrices~$M_D$ not being unique, by testing whether two such representations encode the same point of~$J$.

\begin{algorithm}[H]
\KwIn{Matrices~$M_{D}$ and~$M_{D'}$ representing the points~$x = [\Ld(D)]$ and~$x = [\Ld(D')] \in J(K)$, where~$D, D' \in \Eff^{\delta}(C)$.}
\KwOut{1 if~$x=x'$, 0 else.}
$0 \neq c_u \la$ a random vector in the column span of~$M_{D}$\; \label{alg:MakEq:u}
\tcp{$u \in V_2(-D) \leadsto (u)_2 = D+E$ for some~$E \in \EdC$}
$M_{u \cdot V_2(-D')} \la \FnMul(c_u,M_{D'})$\tcp*{Represents~$V_4(-D-D'-E)$}
$M_W \la \DivSub(M_{u \cdot V_2(-D')}, M_D)$\tcp*{Represents~$V_2(-D'-E)$} \label{alg:MakEq:W}
\Return~$\#$ columns of~$M_W$\;
\caption{Makdisi's equality test in~$J$}
\label{alg:MakEq}
\end{algorithm}

\begin{proof}
Indeed,~$\deg \L^{\otimes 2}(-D'-E) = 2\delta-\delta-\delta=0$, so~$V_2(-D'-E) \defeq \H^0\big(C,\L^{\otimes 2}(-D'-E)\big)$ has dimension~$\le 1$, with equality \iff~$\L^{\otimes 2}(-D'-E)$ is trivial in~$\Pic^0(C)$. But \[ \L^{\otimes 2}(-D'-E) \simeq \frac1u \L^{\otimes 2}(-D'-E) = \O_C(D-D'). \qedhere \]
\end{proof}

In particular, we could test whether~$M_D$ represents~$0 \in J(K)$ by applying this algorithm to~$M_D$ and a representation~$M_{D'}$ of~$0 \in J(K)$. However, it is more efficient to proceed as follows.

\begin{algorithm}[H]
\KwIn{A matrix~$M_{D}$ representing the point~$x = [\Ld(D)] \in J(K)$, where~$D \in \Eff^{\delta}(C)$.}
\KwOut{1 if~$x=0$, 0 else.}
$M_W \la \DivSub(M_{D}, M_{V_1})$\tcp*{Represents~$V_1(-D)$} \label{alg:MakZero:W}
\Return~$\#$columns of~$M_W$\;
\caption{Makdisi's zero test in~$J$}
\label{alg:MakZero}
\end{algorithm}

\begin{proof}
Indeed,~$\deg \L(-D) = \delta-\delta=0$, so~$V_1(-D) \defeq \H^0\big(C,\L(-D)\big)$ has dimension~$\le 1$, with equality \iff~$\L(-D)$ is trivial in~$\Pic^0(C)$.
\end{proof}

\subsection{Representing points on~$\Px_{x,y}$}\label{sect:repPx}

In the previous section, we used the language of line bundles to explain how we encode a point~$\Ld(D) \in J(K)$; but in~\eqref{eqn:(DE)}, our description of the stalks of~$\Px_{x,y}$ used exclusively the language of divisors. In order to translate between these two languages, we can fix a section~$0 \neq d_0 \in V_1 = \H^0(C,\L)$, so that division by~$d_0$ yields an isomorphism~$\L \simeq \O_C(D_0)$, where
\[ D_0 \defeq (d_0)_1 \in \EdC. \]
Thus, given two points~$x,y \in J(K) \times J(K)$ represented in Makdisi format respectively as~$M_D$ and~$M_E$ for some~$D,E \in \EdC$, we have~$x=[\Ld(D)]=[D-D_0]$ and~$y=\Ld(E) = [E-D_0]$, and it is therefore tempting to consider the base point~$(D-D_0,E-D_0)$ of~$\Px_{x,y}$.

However, this would be wrong, as the symbol~$(D-D_0,E-D_0)$ only makes sense when~$D-D_0$ is disjoint from~$E-D_0$, which is plainly not the case. In order to circumvent this issue, we fix \emph{another} section~$0 \neq e_0 \in V_1$, chosen such that its divisor
\[ E_0 \defeq (e_0)_1 \in \EdC \]
is disjoint from~$D_0$ in the spirit of our golden rule~\eqref{eqn:PGoldRule}. We thus also have~$y=[E-D_0]=[E-E_0]$, whence the point~$(D-D_0,E-E_0) \in \Px_{x,y}$, which is now well defined as long as our divisors respect the disjointness conditions~\eqref{eqn:PGoldRule}.
In order to declutter notations, we will henceforth write
\begin{equation} [D,E] \defeq (D-D_0,E-E_0) \defeq \Nm_{D-D_0}\big( 1 \in \O_C(E-E_0) \big). \label{eqn:[DE]} \end{equation}
\begin{rk}
The reader should take care not to confuse this new square-bracket notation~$[D,E]$, which involves effective divisors~$D$ and~$E$ of degree~$\delta$ (and implicitly the divisors~$D_0$ and~$E_0$), with the old round-bracket notation~$(D,E)$, which involves divisors~$D$ and~$E$ of degree 0.
\end{rk}

Therefore, in order to initialise data encoding~$\Px$, we must first precompute data encoding~$J$ as explained in Section~\ref{sect:MakInit}, and then fix two random sections~$d_0$ and~$e_0$ of~$\L$ whose respective divisors~$D_0$ and~$E_0$ are disjoint. 

More specifically, recall from Section~\ref{sect:MakInit} that we have precomputed a matrix~$M_{V_1}$ representing the global section space~$V_1$ of~$\L$. We can thus pick the sections~$d_0, e_0 \in V_1$ simply by choosing two random vectors~$c_{d_0}, c_{e_0}$ in the column span of~$M_{V_1}$. Since~$V_2(-D_0) = d_0 \cdot V_1$, we can then use the~$\FnMul$ routine to easily compute and store for later use a representation~$M_{D_0} = M_{V_2(-D_0)}$ of~$D_0 \defeq (d_0)_1$, and similarly for~$E_0 \defeq (e_0)_1$.
Indeed, we will see that these matrices~$M_{D_0}$ and~$M_{E_0}$ are required to compare points on the same fibre in~$\Px$ (cf. Remark~\ref{rk:compare_need_M0N0} below); for instance,~$M_{D_0}$ is what we need to apply Algorithm~\ref{alg:norm} below at~$D=D_0$, and similarly for~$M_{E_0}$. Furthermore, the matrices~$M_{D_0}$ and~$M_{E_0}$ are also what we need in order to check that~$D_0$ and~$E_0$ are disjoint with the following algorithm. Note that~$D_0$ and~$E_0$ are indeed likely disjoint since we assume that we are in the favourable situation described in Remark~\ref{rk:EnoughPtsDisjoint}; so if~$D_0$ and~$E_0$ are actually not disjoint, we simply start over with new random choices of~$d_0$ and~$e_0$.

\begin{algorithm}[H]
\KwIn{Two matrices~$M_{D_1}$,~$M_{D_2}$ representing divisors~$D_1, D_2 \in \EdC$.}
\KwOut{\True if~$D_1$ and~$D_2$ are disjoint; else \False.}
\For{$i=1$ \KwTo~$2$}{
$M_{V_3(-D_i)} \la \DivAdd(M_{D_i},M_{V_1})$\tcp*{Raise from~$V_2$ to~$V_3$}\label{alg:disjoint:raisen}
}
$M \la$ horizontal concatenation of~$M_{V_3(-D_1)}$ and~$M_{V_3(-D_2)}$\;
$r \la \rank M$\tcp*{$\dim V_3(-D_1) + V_3(-D_2)$}\label{alg:disjoint:rank}
\eIf{$r=3\delta+1-g$}{\Return \True\;}{\Return \False\;}
\caption{Divisor disjointness test (cf.~\cite[3.4]{Mak1})}
\label{alg:disjoint}
\end{algorithm}

\begin{proof}
Fix an integer~$n \le 5$, and consider the addition map
\[ A_n : V_n(-D_1) \times V_n(-D_2) \longrightarrow V_n. \]
By rank-nullity,~$\rank A_n$ is directly connected to~$\dim V_n(-D_1) \cap V_n(-D_2)$. But clearly~$V_n(-D_1) \cap V_n(-D_2) = V_n(-D)$, where~$D \defeq \sup(D_1,D_2) \in \Eff(C)$. We have~$\deg D \le 2 \delta$, with equality \iff~$D_1$ and~$D_2$ are disjoint, so we want to put our hands on~$\deg D$.

If~$n \ge 3$, then Riemann-Roch ensures that~$\dim V_n(-D) = n\delta - \deg D + 1 -g$, so that~$\deg D$ can be read off from~$\rank A_n$; in contrast, for~$n \le 2$,~$\dim V_n(-D)$ need not reflect~$\deg D$ accurately.

Since we are provided with the matrices~$M_{D_i} \defeq M_{V_2(-D_i)}$ representing the~$V_n(-D_i)$ for~$n=2$, we thus first use~$\DivAdd$ at line~\ref{alg:disjoint:raisen} to compute the~$V_n(-D_i)$ for~$n=3$, after which we compute~$r = \rank A_3$ at line~\ref{alg:disjoint:rank} and conclude.
\end{proof}

A point on~$\pi \in \Px_{x,y}$ will then be encoded by a triple
\begin{equation}
\pi = (M_D,M_E,\lambda), \label{eqn:codepi}
\end{equation}
where~$M_D$ is a matrix encoding~$x=[\Ld(D)]$,~$M_E$ is a matrix encoding~$y=\Ld(E)$, and~$\lambda \in \Kx$ is the unique scalar such that~$\pi = \lambda \cdot [D,E]$, where the divisors~$D$,~$E$,~$D_0$, and~$E_0$ must satisfy the disjointness condition~\eqref{eqn:PGoldRule}, which we can check thanks to Algorithm~\ref{alg:disjoint}. We will demonstrate in the rest of this article that this mode of representation lends itself to extending Makdisi's algorithms to~$\Px$.
 
\begin{rk}\label{rk:kpadic}
We showed in~\cite{Hensel} how Makdisi's algorithms can be extended to the case where~$K$ is a local ring over which the curve~$C$ has good reduction. The same goes for the algorithms for~$\Px$ described in this article, provided that in condition~\eqref{eqn:PGoldRule}, we understand the word ``disjoint'' as meaning ``disjoint over the residue field of~$K$''. In practice, this means that in this situation, the linear algebra calculations in Algorithm~\ref{alg:disjoint} must be performed over the residue field of~$K$; in particular, the rank at line~\ref{alg:disjoint:rank} is then still well-defined.
\end{rk}

\subsection{Algorithmic comparison of points on~$\Px_{x,y}$}

As explained in Section~\ref{sect:compare}, comparing points on~$\Px$ is a nontrivial task, which is further complicated by the non-uniqueness of the representation of points of~$J$ in Makdisi's framework. In order to present an algorithm for this task, we begin by translating Lemma~\ref{lem:Poincare_compare_0} in terms of the symbols~$[D,E]$ introduced in~\eqref{eqn:[DE]}:
\begin{lem}\label{lem:Poincare_compare}
 Let~$D_1, D_2, E_1, E_2 \in \EdC$ be effective divisors of degree~$\delta$. Assume~\eqref{eqn:PGoldRule}, so that the sections~$[D_1,E_1]$ and~$[D_2,E_2]$ make sense. 
 Suppose we have nonzero~$d_1, d_2, e_1, e_2 \in V_2$ such that
 \[D_1+(d_1)_2 = D_2+(d_2)_2 \quad \text{and} \quad E_1+(e_1)_2 = E_2+(e_2)_2, \]
 so that~$D_1 \sim D_2$ and~$E_1 \sim E_2$. If either of the expressions
 \[ \frac{\left(\frac{d_1}{d_2}\right)(E_2) \cdot \left(\frac{e_1}{e_2}\right)(D_1)}{\left(\frac{d_1}{d_2}\right)(E_0) \cdot \left(\frac{e_1}{e_2}\right)(D_0)}
 \qquad \text{or} \qquad
 \frac{\left(\frac{d_1}{d_2}\right)(E_1) \cdot \left(\frac{e_1}{e_2}\right)(D_2)}{\left(\frac{d_1}{d_2}\right)(E_0) \cdot \left(\frac{e_1}{e_2}\right)(D_0)} \]
 does not involve any term of the form~$0/0$, then its value agrees with the unique~$\lambda \in \Kx$ such that~$[D_2,E_2] = \lambda \cdot [D_1,E_1]$.
\end{lem}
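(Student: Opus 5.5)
We reduce to Lemma~\ref{lem:Poincare_compare_0} applied to the degree-$0$ divisors $D_i' \defeq D_i - D_0$ and $E_i' \defeq E_i - E_0$, so that by definition~\eqref{eqn:[DE]} we have $[D_i,E_i] = (D_i',E_i')$.

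The first step is to produce the rational functions $d$ and $e$ that Lemma~\ref{lem:Poincare_compare_0} requires. Since $d_1,d_2 \in V_2$ are sections of the same line bundle $\L^{\otimes 2}$, their quotient $d \defeq d_1/d_2$ is a rational function on $C$ with $(d) = (d_1)_2 - (d_2)_2 = D_2 - D_1 = D_2' - D_1'$. Hence $D_2' = D_1' + (d)$. Likewise $e \defeq e_1/e_2$ satisfies $E_2' = E_1' + (e)$.

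Lemma~\ref{lem:Poincare_compare_0} then gives
\[ \lambda = d(E_2')\, e(D_1') = d(E_1')\, e(D_2'). \]
The golden rule~\eqref{eqn:PGoldRule} makes $D_i'$ disjoint from $E_j'$, and $(d)$, $(e)$ are supported on $\Supp D_1\cup\Supp D_2$ and $\Supp E_1\cup\Supp E_2$ respectively. So the evaluations $d(E_j')$ and $e(D_j')$ at these degree-$0$ divisors are well-defined nonzero scalars. Expanding multiplicatively,
\[ d(E_2') = \frac{d(E_2)}{d(E_0)}, \qquad e(D_1') = \frac{e(D_1)}{e(D_0)}, \]
which yields exactly the two displayed expressions.

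The main obstacle is that these individual factors can be $0$ or $\infty$. For instance, $d = d_1/d_2$ may have zeros or poles on $E_0$, since $D_1$ or $D_2$ might meet $E_0$; only $D_i - D_0$ is required to be disjoint from $E_j - E_0$. Worse, $d(E_2)$ and $d(E_0)$ separately may be $0$ or $\infty$ while their ratio $d(E_2 - E_0)$ is finite, because common points cancel.

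To handle this, I would interpret $d(E_2)/d(E_0)$ as $d(E_2 - E_0)$ computed pointwise: write $E_2 - E_0 = \sum n_P P$ with the common points of $E_2$ and $E_0$ cancelled. Each remaining $P$ avoids $\Supp(d)$, because $\Supp(d) \subseteq \Supp(D_1' )\cup \Supp(D_2') \cup \Supp D_0$ up to cancellation; more precisely, $\Supp (d)\subseteq \Supp (D_2'-D_1')$ is disjoint from $\Supp(E_2')$. The hypothesis ``no term of the form $0/0$'' is meant to ensure that the naive quotient of the separately evaluated products agrees with this cancelled evaluation. A numerator and denominator that vanish to the same order at a shared point would give $0/0$, which is excluded. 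A mismatched vanishing cannot occur, because the combined divisor $E_2'$ avoids $\Supp(d)$.

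So the final step is to argue that whenever numerator and denominator are both finite and nonzero (or their ratio is determinate), the value equals $d(E_2')\,e(D_1')$, and symmetrically for the second expression. I expect this bookkeeping of the degenerate cases to be the only delicate point; the rest is a direct substitution into Lemma~\ref{lem:Poincare_compare_0}.
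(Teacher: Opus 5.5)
Your core argument is the paper's proof. With $d = d_1/d_2$ and $e = e_1/e_2$ one has $D_2 - D_0 = D_1 - D_0 + (d)$ and $E_2 - E_0 = E_1 - E_0 + (e)$. Lemma~\ref{lem:Poincare_compare_0} applied to these degree-$0$ divisors, followed by splitting the evaluations multiplicatively, then gives both displayed expressions. That part is correct.

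Your ``main obstacle'', however, comes from misreading the golden rule~\eqref{eqn:PGoldRule}. It requires \emph{every} divisor called $D$ (so $D_0$, $D_1$, $D_2$) to be disjoint from \emph{every} divisor called $E$ (so $E_0$, $E_1$, $E_2$), not just $D_i - D_0$ from $E_j - E_0$. This is why, for instance, Algorithm~\ref{alg:PMulFlip} checks that $D_3$ is disjoint from both $E$ and $E_0$. Since $(d) = D_2 - D_1$ and $(e) = E_2 - E_1$, the six scalars $d(E_0), d(E_1), d(E_2), e(D_0), e(D_1), e(D_2)$ therefore all lie in $\Kx$. The identity $d(E_2 - E_0) = d(E_2)/d(E_0)$ is then immediate, and there is no cancellation bookkeeping to do.

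The ``$0/0$'' proviso in the statement is about something else. There, $\left(\frac{d_1}{d_2}\right)(E_2)$ is meant to be computed as a quotient $d_1(E_2)/d_2(E_2)$ of norms of the \emph{sections} $d_1, d_2 \in V_2$, as Algorithm~\ref{alg:norm} does. Write $G \defeq D_1 + (d_1)_2 = D_2 + (d_2)_2$. At any point $P$ of an $E$-divisor, both sections vanish to order exactly $\ord_P G$, because $P \notin \Supp D_1 \cup \Supp D_2$. So this quotient is $0/0$ precisely when the auxiliary divisor $G$, which the golden rule does not control, meets the divisor at which one evaluates. Otherwise the quotient equals the value of the rational function $d_1/d_2$, and likewise for the $e$-terms. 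With that remark in place of your last two paragraphs, your proof is complete.
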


\begin{proof}
This follows from Lemma~\ref{lem:Poincare_compare_0}, since~$D_2-D_0 = D_1-D_0 + (d_1/d_2)$ and~$E_2-E_0 = E_1-E_0 +(e_1/e_2)$. 
\end{proof}

In order to evaluate~$\lambda$ algorithmically in Lemma~\ref{lem:Poincare_compare}, we still need to evaluate terms of the form~$\left(\frac{s_1}{s_2}\right)(D)$, where~$s_1$,~$s_2 \in V_2$ and~$D \in \EdC$. This is actually a nontrivial task, since~$D$ will only be given to us as the matrix~$M_D$ representing the subspace~$V_2(-D) \subset V_2$, which only describes~$D$ indirectly, and also because~$s_1$ and~$s_2$ will only be known to us through the column vectors~$c_{s_1}, c_{s_2} \in K^{n_Z}$, which only tell us the values of~$s_1$ and of~$s_2$ at the fixed points~$Z_i$ which have no reason to have any connection with the support of~$D$ where we would like to actually evaluate~$s_1$ and~$s_2$. We therefore present an algorithm achieving this, and on which most of the machinery presented in this article relies.


\begin{algorithm}[H]
\KwIn{An integer~$n \le 3$, two column vectors~$c_{s_1},c_{s_2} \in K^{n_Z}$ representing~$s_1,s_2 \in V_n$, a matrix~$M_D$ encoding~$D \in \EdC$.}
\KwOut{$\left(\frac{s_1}{s_2}\right)(D) \in \Kx$ if~$(s_1)_n$ and~$(s_2)_n$ are both disjoint from~$D$, else \FAIL.}
$M_{V_{n+2}(-D)} \la \DivAdd(V_n,M_D)$\tcp*{Encodes~$V_{n+2}(-D)$,~$n+2 \le 5$.}
$M_S \la$ matrix whose columns represent a basis~$B$ of a linear supplement~$V_2 = V_2(-D) \oplus S$ of~$V_2(-D)$ in~$V_2$\;
$M_{S'} \la$ matrix whose columns represent a basis~$B'$ of a linear supplement~$V_{n+2} = V_{n+2}(-D) \oplus S'$ of~$V_{n+2}(-D)$ in~$V_{n+2}$\;
\For{$i=1$ \KwTo~$2$}{
\tcp{With respect to the bases~$B$ and~$B'$:}
$\delta_i \la \det \left( F_i : S \hookrightarrow V_2 \overset{\odot c_{s_i}}{\longrightarrow} V_{n+2} \twoheadrightarrow S' \right)$\;
\If{$\delta_i = 0$}{\FAIL\tcp{$s_i(D)=0$}}
}
\Return~$\delta_1 / \delta_2$\;
\caption{Evaluating a ratio of sections at a divisor}
\label{alg:norm}
\end{algorithm}


\begin{proof}
This is actually a generalisation of~\cite[Algorithm 5]{Hensel}, to which we refer the reader for a more detailed presentation and proof.
The idea is that since~$V_2(-D) = \Set{s \in V_2}{s_{\vert D}=0}$, the skyscraper sheaf~$\O_D$ may be identified with~$V_2/V_2(-D) \simeq S$, and similarly with~$S'$. Therefore, the maps~$F_i$ represent multiplication by~$s_i$ under these identifications. The~$\delta_i$ thus correspond to~$s_i(D)$, up to an ambiguity due to the choice of the bases~$B$ and~$B'$; but this ambiguity conveniently cancels out in the ratio~$\delta_1/\delta_2$.
\end{proof}

As a first application of Algorithm~\ref{alg:norm}, we can finally show how to compare points on~$\Px$. For this, we will suppose that Makdisi's equality test Algorithm~\ref{alg:MakEq} has been slightly altered so that, when this algorithm returns a positive answer, it actually returns a pair~$(c_u,c_{u'})$, where~$u \in V_2$ is computed at line~\ref{alg:MakEq:u} and~$u' \in V_2$ is a nonzero element of the one-dimensional space~$W$ computed at line~\ref{alg:MakEq:W}, so that~$u'$ is unique up to scaling. The point of this modification is that we then have~$D'+(u)_2 = D+D'+E = D+(u')_2$ in the notations of Algorithm~\ref{alg:MakEq}; in other words, the pair~$(u,u')$ yields an explicit certificate of linear equivalence between~$D$ and~$D'$, and is therefore exactly what we need in order to apply the formula of Lemma~\ref{lem:Poincare_compare}.

\begin{algorithm}[H]
\KwIn{Two points~$\pi_1 = (M_{D_1},M_{E_1},\lambda_1)$ and~$\pi_2 = (M_{D_2},M_{E_2},\lambda_2)$ of~$\Px$.}
\KwOut{The unique~$\lambda \in \Kx$ such that~$\pi_2 = \lambda \cdot \pi_1$ if~$\pi_1$ and~$\pi_2$ lie in the same stalk of~$\Px$;~$0$ else.}
\Repeat{at line~\ref{alg:PoincareCompare:Norms}, all four calls to Algorithm~\ref{alg:norm} succeed}{
\tcp{Using Algorithm~\ref{alg:MakEq}:}
\eIf{$D_1 \sim D_2$}
{$c_{d_1},c_{d_2} \la$ \begin{tabular}c column vectors encoding sections~$ d_1, d_2 \in V_2$ \\ such that~$D_1+(d_1)_2 = D_2+(d_2)_2$\; \end{tabular}}
{\Return 0\;}
\tcp{Using Algorithm~\ref{alg:MakEq} again:}
\eIf{$E_1 \sim E_2$}
{$c_{e_1},c_{e_2} \la$ \begin{tabular}c column vectors encoding sections~$ e_1, e_2 \in V_2$ \\ such that~$E_1+(e_1)_2 = E_2+(e_2)_2$\; \end{tabular}}
{\Return 0\;}
\tcp{Using Algorithm~\ref{alg:norm} (with~$n=2$) four times:}
$\displaystyle \lambda \la \left(\frac{d_1}{d_2}\right)(E_2) \cdot \left(\frac{d_2}{d_1}\right)(E_0)\cdot \left(\frac{e_1}{e_2}\right)(D_1) \cdot \left(\frac{e_2}{e_1}\right)(D_0)$\;\label{alg:PoincareCompare} \label{alg:PoincareCompare:Norms}
}
\Return~$\displaystyle \lambda \frac{\lambda_2}{\lambda_1}$\;
\caption{Comparison of points of~$\Px$}
\label{alg:PoincareCompare}
\end{algorithm}

\begin{proof}
As we explained, this follows from Lemma~\ref{lem:Poincare_compare}. The point of the \textbf{repeat\dots until} loop is that, in Algorithm~\ref{alg:MakEq}, the section~$u$ is chosen at random at line~\ref{alg:MakEq:u}, which makes the pair~$(u,u')$ also random; therefore, we cannot be always certain that Lemma~\ref{lem:Poincare_compare} will apply, albeit we expect it will with high probability for the reason explained in Remark~\ref{rk:EnoughPtsDisjoint}.
\end{proof}

\begin{rk}\label{rk:compare_need_M0N0}
At line~\ref{alg:PoincareCompare:Norms} of Algorithm~\ref{alg:PoincareCompare}, in order to use Algorithm~\ref{alg:norm} to compute the terms~$\left(\frac{d_1}{d_2}\right)(E_0)$ and~$\left(\frac{e_1}{e_2}\right)(D_0)$, we need matrices~$M_{D_0}$ and~$M_{E_0}$ representing respectively the subspaces~$V_2(-D_0) = d_0 \cdot V_1$ and~$V_2(-E_0) = e_0 \cdot V_1$ of~$V_2$, which is one of the reasons why we precomputed these matrices in Section~\ref{sect:repPx}.
\end{rk}

\begin{rk}
Lemma~\ref{lem:Poincare_compare} gives~$\lambda =  \frac{\left(\frac{d_1}{d_2}\right)(E_2) \cdot \left(\frac{e_1}{e_2}\right)(D_1)}{\left(\frac{d_1}{d_2}\right)(E_0) \cdot \left(\frac{e_1}{e_2}\right)(D_0)}$; the formula at line~\ref{alg:PoincareCompare:Norms}, which is of course equivalent, avoids divisions in~$K^\times$, since those are usually slower than multiplications.
\end{rk}

\begin{rk}
The description of~$\Px$ yields canonical trivialisations~$\Px_{0,y} \simeq \O_{J,y} \simeq \Kx$ and~$\Px_{x,0} \simeq \O_{J,x} \simeq \Kx$ which are, in particular, compatible with each other. We can therefore canonically and consistently assign a scalar~$\lambda \in \Kx$ to~$[D,E]$ whenever at least one of~$D, E$ represents~$0 \in J$ (i.e. agrees with~$\L$ in~$\Pic^\delta(C)$). This~$\lambda$ can be computed by an algorithm similar to Algorithm~\ref{alg:PoincareCompare}. Indeed, suppose for example that~$D$ represents~$0 \in J$, and let~$0 \neq d$ be an element of the one-dimensional space~$V_1(-D)$ computed at line~\ref{alg:MakZero:W} of Algorithm~\ref{alg:MakZero}, so that~$d$ is unique up to scaling and satisfies~$(d)_1 = D$. Applying Lemma~\ref{lem:Poincare_compare}, we find that
\[ \lambda = \left(\frac{d}{d_0}\right)(E) \cdot \left(\frac{d_0}{d}\right)(E_0), \]
which we can compute by two calls to Algorithm~\ref{alg:norm} (with~$n=1$ this time) which, in this case, are guaranteed to always succeed by~\eqref{eqn:PGoldRule}.

Similarly, if~$E$ represents~$0 \in J$, we find
\[ \lambda = \left(\frac{e}{e_0}\right)(D) \cdot \left(\frac{e_0}{e}\right)(D_0) \quad \text{where } (e)_1=E, \]
which can be computed in the same way.
\end{rk}

\subsection{The canonical involution of~$\Px$}

The canonical ``swap'' involution
\[ \setmap{J \times J}{J \times J}{(x,y)}{(y,x)} \]
extends to a canonical involution
\[ \tau : \setmap{\Px}{\Px}{\lambda \cdot (D,E)}{\lambda \cdot (E,D)}. \]
Indeed, Lemma~\ref{lem:Poincare_compare_0} confirms that~$\tau$ is well-defined, in the sense that it does not depend on the choice of~$\lambda \in \Kx$ and~$D,E \in \Div^0(C)$ to represent points of~$\Px$; see also~\cite[Section 6.5]{EdiLido}.

In order to design an algorithm for~$\tau$, we need to translate its definition in terms of the symbols~$[D,E]$ defined in~\eqref{eqn:[DE]}. For this, we will require the following generalisation of Weil reciprocity:

\begin{lem}\label{lem:WeilSections}
Let~$X$ be a ``nice'' curve,~$\M$ a line bundle on~$X$, and~$s,t,u$ three meromorphic sections of~$\M$ whose divisors~$S \defeq (s)_\M$,~$T \defeq (t)_\M$, and~$U \defeq (u)_\M$ are pairwise disjoint, so that the scalar
\[ \lambda \defeq \left(\frac{s}{t}\right)(U) \cdot \left(\frac{t}{u}\right)(S) \cdot \left(\frac{u}{s}\right)(T) \]
is well-defined. Then~$\lambda$ does not depend on~$s,t,u$, and in fact,~$\lambda = (-1)^{\deg \M}$.
\end{lem}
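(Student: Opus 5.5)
The plan is to reduce the statement to classical Weil reciprocity in its general form with tame symbols. For rational functions $f,g$ on $X$,
\[ \prod_{P} (-1)^{\ord_P f \cdot \ord_P g} \frac{f^{\ord_P g}}{g^{\ord_P f}}(P) = 1. \]
I take this as known. When $f$ and $g$ have disjoint divisors it reduces to $f((g))=g((f))$.

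First, set $f \defeq s/u$ and $g \defeq t/u$. These are rational functions with $(f) = S-U$ and $(g) = T-U$. Their divisors are not disjoint, since both contain $U$, so this is exactly where the tame-symbol version is needed. Rewrite $\lambda$ in terms of $f,g$:
\begin{itemize}
\item $s/t = f/g$;
\item $t/u = g$;
\item $u/s = 1/f$.
\end{itemize}
Hence
\[ \lambda = \left(\frac fg\right)(U)\cdot g(S)\cdot f(T)^{-1}. \]

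Second, evaluate the Weil reciprocity product point by point, using that $S,T,U$ are pairwise disjoint.
\begin{itemize}
\item At $P\in\Supp S$: $\ord_P g=0$, so the contribution is $g(P)^{-\ord_P S}$. Over $\Supp S$ this gives $g(S)^{-1}$.
\item At $P\in\Supp T$: symmetrically, the contribution over $\Supp T$ is $f(T)$.
\item At $P\in\Supp U$: write $m=\ord_P U$, so $\ord_P f=\ord_P g=-m$. The sign is $(-1)^{m^2}=(-1)^m$, and the function part is $(f^{-m}/g^{-m})(P) = (g/f)(P)^m$.
\end{itemize}
Here $f/g=s/t$ is a unit at $P$, because $P\notin S\cup T$, so the value is well defined. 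Over $\Supp U$ the contribution is therefore $(-1)^{\deg U}\,(g/f)(U) = (-1)^{\deg U}\,(f/g)(U)^{-1}$.

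Combining the three contributions, reciprocity reads
\[ g(S)^{-1} f(T)\,(-1)^{\deg U}(f/g)(U)^{-1}=1, \]
that is $\lambda^{-1}(-1)^{\deg U}=1$. Since $\deg U=\deg\M$, this gives $\lambda=(-1)^{\deg \M}$, which in particular does not depend on $s,t,u$.

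The main obstacle is the bookkeeping of signs and exponents in the tame symbol at the points of $U$. In particular one must keep the convention straight so that the result is $(-1)^{\deg\M}$ and not its inverse; these coincide anyway, since $(-1)^{\pm d}$ agree. Another point needing care is that the evaluations of ratios of sections of $\M$, such as $(s/t)(U)$, coincide with evaluations of the rational functions $f/g$, which needs no trivialisation since the ratio is a function. If $U$ has negative multiplicities, i.e. $u$ has poles, the same computation applies with $m<0$, and $(-1)^{m^2}=(-1)^m$ still holds.
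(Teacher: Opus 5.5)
Your proof is correct and is essentially the paper's argument: both apply the tame-symbol form of Weil reciprocity to the same pair of functions $s/u$ and $t/u$ and read off the contributions over $\Supp S$, $\Supp T$ and $\Supp U$. The only difference is that you take the symbol in the order $\{s/u,t/u\}_P$ rather than the paper's $\{t/u,s/u\}_P$; by antisymmetry this produces the inverse identity and hence the same value $\lambda=(-1)^{\deg\M}$.
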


\begin{proof}
Write~$S = \sum_{P \in X} n_{S,P} P$, and similarly for~$T$ and~$U$. Recall the version of Weil reciprocity for non-disjoint divisors: Whenever~$w$ and~$z$ are nonzero rational functions on~$X$, we have~$\prod_{P \in X} \left\{w,z\right\}_P = 1$, where
\[ \left\{w,z\right\}_P = (-1)^{\ord_P w \ord_P z} \frac{w^{\ord_P z}}{z^{\ord_P w}}(P); \]
in particular,~$\left\{w,z\right\}_P = 1$ for~$P$ away from the support of~$(w)$ and of~$(z)$.

Apply this to~$w=t/u$ and~$z=s/u$. As~$S$,~$T$, and~$U$ are pairwise disjoint, we find that
\begin{itemize}
\item if~$P \in \Supp S$, then~$\ord_P w = 0$ and~$\ord_P z = n_{S,P}$, whence 
\[ \left\{w,z\right\}_P = (-1)^0 \frac{w^{n_{S,P}}(P)}1 = \left( \frac{t}{u} (P)\right)^{n_{S,P}}, \]
\item if~$P \in \Supp T$, then~$\ord_P w = n_{T,P}$ and~$\ord_P z = 0$, whence 
\[ \left\{w,z\right\}_P = (-1)^0 \frac1{z^{n_{T,P}}(P)} = \left( \frac{u}{s} (P)\right)^{n_{T,P}}, \]
\item if~$P \in \Supp U$, then~$\ord_P w  = \ord_P z = -n_{U,P}$, whence 
\[ \left\{w,z\right\}_P = (-1)^{(-n_{U,P})^2} \frac{w^{-n_{U,P}}(P)}{z^{-n_{U,P}}(P)} = (-1)^{n_{U,P}} \left( \frac{s}{t} (P)\right)^{n_{U,P}}, \]
\item and~$\left\{w,z\right\}_P =1$ else.
\end{itemize}
Therefore Weil reciprocity tells us that
\begin{align*}
1 &= \prod_{P \in X} \left\{w,z\right\}_P \\
&= \prod_{P \in \Supp S} \left\{w,z\right\}_P \prod_{P \in \Supp T} \left\{w,z\right\}_P \prod_{P \in \Supp U} \left\{w,z\right\}_P \\
&= \left(\frac{t}{u}\right)(S) \cdot \left(\frac{u}{s}\right)(T) \cdot (-1)^{\deg U} \left(\frac{s}{t}\right)(U).
\end{align*}

The author thanks Davide Lombardo for this proof.
\end{proof}

\begin{lem}\label{lem:swap:formula}
Let~$\pi = [D,E] \in \Px$, where~$D,E \in \EdC$ are such that~$D$,~$E$,~$D_0$, and~$E_0$ are pairwise disjoint. Then~$[E,D]$ is a well-defined point of~$\Px$, and
\[ \tau(\pi) = (-1)^\delta \left(\frac{d_0}{e_0}\right)(D-E) \cdot [E,D]. \]
\end{lem}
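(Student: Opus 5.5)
The plan is to compare $\tau(\pi)$ and $[E,D]$ as two points of the same stalk $\Px_{y,x}$, by moving one divisor at a time with Lemma~\ref{lem:Poincare_compare_0}. First, well-definedness: $[E,D] = (E-D_0, D-E_0)$ only requires $E-D_0$ and $D-E_0$ to have disjoint supports, which follows from the pairwise disjointness of $D,E,D_0,E_0$. By definition of $\tau$ we have $\tau(\pi) = (E-E_0, D-D_0)$, and we want the scalar relating it to $(E-D_0, D-E_0)$.

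The naive one-step application of Lemma~\ref{lem:Poincare_compare_0} fails. Passing from $(E-E_0,D-D_0)$ to $(E-D_0,D-E_0)$ uses the functions $e_0/d_0$ and $d_0/e_0$, whose divisors $\pm(E_0-D_0)$ meet the supports of the divisors at which they would have to be evaluated. For example, $(e_0/d_0)(D-E_0)$ is meaningless. I will therefore go through an intermediate point. Pick an auxiliary section $0\neq u\in V_1$ whose divisor $U=(u)_1$ is disjoint from $D,E,D_0,E_0$; such a $u$ exists since $K$ is large enough, as in Remark~\ref{rk:EnoughPtsDisjoint}. Then I move in three legal steps, each respecting~\eqref{eqn:PGoldRule}:
\begin{align*}
(E-E_0,D-D_0) &\to (E-U,D-D_0), && \text{first coordinate changed by } (e_0/u),\\
(E-U,D-D_0) &\to (E-U,D-E_0), && \text{second coordinate changed by } (d_0/e_0),\\
(E-U,D-E_0) &\to (E-D_0,D-E_0), && \text{first coordinate changed by } (u/d_0).
\end{align*}
By Lemma~\ref{lem:Poincare_compare_0}, with only one function nontrivial at each step, the factors are $(e_0/u)(D-D_0)$, $(d_0/e_0)(E-U)$ and $(u/d_0)(D-E_0)$ respectively. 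All of these are well defined thanks to the disjointness assumptions.

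Multiplying the three factors and regrouping, the terms evaluated at $D$ combine to $(e_0/d_0)(D)$, and those at $E$ give $(d_0/e_0)(E)$. The remaining terms are
\[ \left(\frac{u}{e_0}\right)(D_0)\cdot\left(\frac{e_0}{d_0}\right)(U)\cdot\left(\frac{d_0}{u}\right)(E_0). \]
This is exactly the scalar of Lemma~\ref{lem:WeilSections} for the three sections $u,e_0,d_0$ of $\M=\L$, whose divisors $U,E_0,D_0$ are pairwise disjoint, so it equals $(-1)^\delta$. Hence
\[ [E,D] = (-1)^\delta \left(\frac{d_0}{e_0}\right)(E-D)\cdot\tau(\pi), \]
and inverting gives the claimed formula, since $(-1)^{-\delta}=(-1)^\delta$.

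The main obstacle I expect is the bookkeeping in this last step. One must check that each factor is assigned to the correct divisor in Lemma~\ref{lem:Poincare_compare_0}, keeping the $D$-slot and $E$-slot roles straight, and that the leftover terms match the cyclic pattern of Lemma~\ref{lem:WeilSections} with the right orientation, since a misordering would give an inverse. The choice of the auxiliary $u$ is harmless, because the final answer does not depend on it.
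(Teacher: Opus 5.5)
Your proof is correct and follows the paper's strategy: route through intermediate points built from auxiliary generic sections of~$\L$, apply Lemma~\ref{lem:Poincare_compare_0} at each step, and absorb the auxiliary terms with Lemma~\ref{lem:WeilSections}. The only difference is bookkeeping. The paper uses two auxiliary sections $s,t$ and two steps that each move both coordinates, ending with the triple $t,s,d_0$ in Lemma~\ref{lem:WeilSections}. You use a single section $u$ and three one-coordinate moves, which is slightly more economical and ends with the triple $u,e_0,d_0$.
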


\begin{proof}
By definition,
\[ \tau(\pi) \defeq \tau\big( (D-D_0,E-E_0) \big) \defeq (E-E_0,D-D_0), \]
which we would like to compare to~$[E,D] \defeq (E-D_0,D-E_0)$.

We must resist the temptation of writing
\[ (E-E_0,D-D_0) = (E-D_0+D_0-E_0,D-E_0+E_0-D_0) \]
and applying Lemma~\ref{lem:Poincare_compare_0}, as this leads to terms of the form~$0/0$ (and indeed the disjointness hypothesis of Lemma~\ref{lem:Poincare_compare_0} is not satisfied). Therefore, we instead introduce two nonzero sections~$s,t \in V_1$, which (possibly after extending scalars) we assume are generic enough that the divisors~$D$,~$E$,~$D_0$,~$E_0$,~$S \defeq (s)_1$, and~$T \defeq (t)_1$ are pairwise disjoint, and calculate
\[ (E-E_0,D-D_0) = (E-S+(s/e_0),D-T+(t/d_0)) = \lambda_1 \cdot (E-S,D-T) \]
where, by Lemma~\ref{lem:Poincare_compare_0},
\[ \lambda_1 =  \left(\frac{s}{e_0}\right)(D-D_0) \cdot  \left(\frac{t}{d_0}\right)(E-S). \]
Applying Lemma~\ref{lem:Poincare_compare_0} once more, we find
\[ (E-S,D-T) = (E-D_0+(d_0/s),D-E_0+(e_0/t)) = \lambda_2 \cdot (E-D_0, D-E_0) , \]
where
\[ \lambda_2 = \left(\frac{d_0}{s}\right)(D-T) \cdot \left(\frac{e_0}{t}\right)(E-D_0). \]
Thus~$\tau([D,E]) = \lambda \cdot (E-D_0,D-E_0) \defeq \lambda \cdot [E,D]$, where
\begin{align*}
\lambda & \defeq \lambda_1 \cdot \lambda_2 \\
&= \left(\frac{s}{e_0}\right)(D-D_0) \cdot  \left(\frac{t}{d_0}\right)(E-S) \cdot \left(\frac{d_0}{s}\right)(D-T) \cdot \left(\frac{e_0}{t}\right)(E-D_0) \\
&= \left(\frac{d_0}{e_0}\right)(D) \cdot \left(\frac{e_0}{d_0}\right)(E) \cdot \left(\frac{t}{s}\right)(D_0) \cdot \left(\frac{d_0}{t}\right)(S) \cdot \left(\frac{s}{d_0}\right)(T),
\end{align*}
and we conclude by Lemma~\ref{lem:WeilSections}.
\end{proof}

Lemma~\ref{lem:swap:formula} assumes that~$D$,~$E$,~$D_0$, and~$E_0$ are pairwise disjoint, which is a stricter requirement than our rule~\eqref{eqn:PGoldRule}. In order to reduce to this case, we need an effective version of the moving lemma; this is achieved by Algorithm~\ref{alg:MovingLemma:Px} below, whose presentation we defer to the next section because it relies on Makdisi's algorithms for the group law in~$J$ which we have not introduced yet. We can thus implement the canonical involution~$\tau$ of~$\Px$ as follows:

\begin{algorithm}[H]
\KwIn{A point~$\pi = (M_D,M_E,\lambda)$ of~$\Px$.}
\KwOut{The image of~$\pi$ under the involution~$\tau$.}
Attempt to use Algorithm~\ref{alg:norm} (with~$n=1$) twice to evaluate~$\displaystyle \mu \la \left(\frac{d_0}{e_0}\right)(D) \cdot \left(\frac{e_0}{d_0}\right)(E)$\; \label{alg:Sawp:start}
\If{Algorithm~\ref{alg:norm} succeeded}{\Return~$(M_E,M_D,(-1)^\delta \lambda \mu)$\;}
\tcp{Using Algorithm~\ref{alg:MovingLemma:Px}:}
$(M_{D},M_{E},\lambda) \la$ another representation of~$\pi$\;
Go back to line~\ref{alg:Sawp:start}\;
\caption{Canonical involution on~$\Px$}
\label{alg:Swap}
\end{algorithm}

\begin{rk}
In view of Remark~\ref{rk:EnoughPtsDisjoint}, disjointness is very likely, so we expect Algorithm~\ref{alg:Swap} to terminate after very few iterations. Also note that instead of relying on Algorithm~\ref{alg:disjoint} to enforce the disjointness hypothesis of Lemma~\ref{lem:swap:formula}, Algorithm~\ref{alg:Swap} simply watches whether Algorithm~\ref{alg:norm} succeeds. While crude, this technique is quite efficient in practice, so we often use it in the other algorithms presented in this article. 
\end{rk}

\section{Algorithms for the partial group laws}\label{sect:Laws}

We will now present algorithms for the partial group laws of~$\Px$.

\begin{rk}\label{rk:OnlyLeft}
We will actually restrict ourselves to algorithms for the left partial group law; indeed, there is absolutely no need to use the ``swap'' Algorithm~\ref{alg:Swap} to obtain the corresponding algorithms for the right partial group law, since this can be achieved much more simply by swapping the~$D$'s and the~$E$'s and~$d_0$ with~$e_0$ in the arguments of these algorithms.
\end{rk}

\subsection{Review of Makdisi's algorithms for the group law of~$J$}\label{sect:MakLaw}

Let us briefly review Makdisi's algorithms for the group law of~$J$, before we extend them to the partial group laws of~$\Px$ in Section~\ref{sect:alg_gp_Px} below.

Similarly to the chord-tangent process defining the group law on an elliptic curve, Makdisi actually defines two separate algorithms, one (Algorithm~\ref{alg:MakAddFlip} below) to compute the so-called AddFlip:~$x,y \mapsto -(x+y)$, and one (Algorithm~\ref{alg:MakNeg} below) for the Negation~$x \mapsto -x$. It is clear that these algorithms can be combined to compute not only the addition~$x,y \mapsto x+y$, but also the subtraction~$x,y \mapsto x-y$.

Recall that~$M_D$ denotes a matrix representing the subspace~$V_2(-D) \subset V_2$ encoding~$D \in \EdC$.

\begin{algorithm}[H]
\KwIn{Matrices~$M_{D_1}$ and~$M_{D_2}$ representing the points~$x_1 = [\Ld(D_1)]$ and~$x_2 = [\Ld(D_2)] \in J(K)$, where~$D_1, D_2 \in \Eff^{\delta}(C)$.}
\KwOut{A matrix~$M_{D_3}$ representing the point~$x_3 = [\Ld(D_3)]$ such that~$x_1+x_2+x_3=0$.}
$M_{V_4(-D_1-D_2)} \la \DivAdd(M_{D_1},M_{D_2})$\;
$M_{V_3(-D_1-D_2)} \la \DivSub(M_{V_4(-D_1-D_2)},M_{V_1})$\;
$0 \neq c_s \la$ a random vector in the column span of~$M_{V_3(-D_1-D_2)}$\; \label{alg:MakAddFlip:s}
\tcp{Represents~$0 \neq s \in V_3(-D_1-D_2)$}
\tcp{$\leadsto (s)_3 = D_1+D_2+D_3$ for some~$D_3 \in \EdC$}
$M_{s \cdot V_2} \la \FnMul(c_s,V_2)$\tcp*{Represents~$V_5(-D_1-D_2-D_3)$}
$M_{D_3} \la \DivSub(M_{s \cdot V_2}, M_{V_3(-D_1-D_2)})$\;
\Return~$M_{D_3}$\;
\caption{Makdisi's AddFlip~$(x_1,x_2) \mapsto -(x_1+x_2)$ in~$J$}
\label{alg:MakAddFlip}
\end{algorithm}

\begin{algorithm}[H]
\KwIn{A matrix~$M_{D}$ representing the point~$x = [\Ld(D)] \in J(K)$, where~$D \in \Eff^{\delta}(C)$.}
\KwOut{A matrix~$M_{D'}$ representing the point~$x' = [\Ld(D')]$ such that~$x+x'=0$.}
$0 \neq c_s \la$ a random vector in the column span of~$M_{D}$\; \label{alg:MakNeg:s}
\tcp{Represents~$0 \neq s \in V_2(-D)$}
\tcp{$\leadsto (s)_2 = D+D'$ for some~$D' \in \EdC$}
$M_{s \cdot V_2} \la \FnMul(c_s,V_2)$\tcp*{Represents~$V_4(-D-D')$}
$M_{D'} \la \DivSub(M_{s \cdot V_2}, M_{D})$\;
\Return~$M_{D'}$\;
\caption{Makdisi's Negation~$x \mapsto -x$ in~$J$}
\label{alg:MakNeg}
\end{algorithm}

\subsection{Effective moving lemmas}

In order to keep track of linear equivalences, we will assume from now on that Algorithm~\ref{alg:MakAddFlip} has been altered so as to also return the random vector~$c_s$ that it generates at line~\ref{alg:MakAddFlip:s}, and similarly that Algorithm~\ref{alg:MakNeg} now also returns the random vector~$c_s$ generated at line~\ref{alg:MakNeg:s}.

For example, by applying twice the Negation Algorithm~\ref{alg:MakNeg} thus modified, we obtain an effective version of the moving lemma for~$J$, which is reminiscent of the equality test Algorithm~\ref{alg:MakEq} but serves a different purpose:

\begin{algorithm}[H]
\KwIn{A matrix~$M_{D}$ representing the point~$x = [\Ld(D)] \in J(K)$, where~$D \in \Eff^{\delta}(C)$.}
\KwOut{A matrix~$M_{D'}$ representing the same point~$x$, and a pair of vectors~$c_{s}$,~$c_{s'}$, where~$s,s' \in V_2$ satisfy~$D+(s)_2 = D'+(s')_2$.}
$M_{D''}, c_{s'} \la \Negation(M_D)$\tcp*{$(s')_2 = D+D''$}
$M_{D'}, c_{s} \la \Negation(M_{D''})$\tcp*{$(s)_2 = D''+D'$}
\Return~$M_{D'},c_s,c_{s'}$\;
\caption{Moving lemma for~$J$}
\label{alg:MovingLemma:J}
\end{algorithm}

By combining Algorithm~\ref{alg:MovingLemma:J} with Lemma~\ref{lem:Poincare_compare}, we obtain an effective moving lemma for~$\Px$.

\begin{algorithm}[H]
\KwIn{A point~$\pi = (M_D,M_E,\lambda)$ of~$\Px$.}
\KwOut{Another representation~$(M_{D'},M_{E'},\lambda')$ of~$\pi$.}
\Repeat{Algorithm~\ref{alg:norm} succeeds in evaluating~$\mu$}
{
\tcp{Using Algorithm~\ref{alg:MovingLemma:J} twice:}
Find~$M_{D'},c_d,c_{d'}$ and~$M_{E'},c_e,c_{e'}$ with~$d,d',e,e' \in V_2$ such that~$D+(d)_2 = D'+(d')_2$ and~$E+(e)_2 = E'+(e')_2$\;
\tcp{Using Algorithm~\ref{alg:norm} (with~$n=2$) four times:}
$\displaystyle \mu \la \left(\frac{d'}{d}\right)(E) \cdot \left(\frac{d}{d'}\right)(E_0)\cdot \left(\frac{e'}{e}\right)(D') \cdot \left(\frac{e}{e'}\right)(D_0)$\;
}
\Return~$(M_{D'},M_{E'}, \lambda \cdot \mu)$\;
\caption{Moving lemma for~$\Px$}
\label{alg:MovingLemma:Px}
\end{algorithm}

\begin{rk}
We will only require this moving lemma algorithm for~$\Px$ for the implementation of the canonical involution of~$\Px$ (Algorithm~\ref{alg:Swap} above).
\end{rk}

\subsection{Alignment procedures}\label{sect:align}

Before we present our algorithms for the partial group laws in~$\Px$, we wish to address the slight technical difficulty highlighted by the following example. Suppose we are given two points~$\pi \in \Px_{x,y}$ and~$\pi' \in \Px_{x',y'}$ of~$\Px$ in the format~\eqref{eqn:codepi}, that is to say as~$\pi = \lambda \cdot [D,E]$ and~$\pi'=\lambda \cdot [D',E']$, where~$x,y,x',y' \in J$ are the points represented by~$D,E,D',E' \in \EdC$ respectively and~$\lambda, \lambda' \in \Kx$; and suppose for example that we wish to compute the left partial group law~$\pi \Ltimes \pi'$. This obviously assumes that~$y=y'$; however, this does not imply that~$E=E'$ as divisors, but merely that~$E \sim E'$ are linearly equivalent.

\begin{de}
We will say that the calculation~$[D,E] \Ltimes [D',E']$ is \emph{aligned} if~$E=E'$ as divisors, and \emph{misaligned} else.
\end{de}

In the misaligned case (which is, of course, the general one), the algorithmic calculation of~$\pi \Ltimes \pi'$ would then proceed in two steps:
\begin{enumerate}
\item An \emph{alignment step}, where we use some variant of Algorithm~\ref{alg:PoincareCompare} to find~$\mu \in \Kx$ such that~$[D',E'] = \mu \cdot [D',E]$,
\item and the crux of the partial group law calculation, where we find~$D'' \in \EdC$ representing~$x+x'$ and~$\lambda'' \in \Kx$ such that
\[ [D,E] \Ltimes [D',E] = \lambda'' \cdot [D'',E], \]
an \emph{aligned} calculation.
\end{enumerate}
The final result would of course be~$\pi \Ltimes \pi' = \lambda \lambda' \mu \lambda'' \cdot [D'',E]$.

From this example, it is clear that the necessity to start with the alignment step, although trivial, would complicate the code for the partial group laws and obfuscate its interesting part, which is the group law calculation in the aligned case. Therefore, before we present algorithms to compute the partial group laws, in this section, we will present algorithms performing the alignment step. Thus, when we present algorithms for the partial group laws in the following sections, we will be able to assume without loss of generality that we are in the aligned case, which will considerably simplify the exposition of these algorithms. For similar reasons, in practice, we would recommend that alignment procedures be implemented as separate functions from the partial group laws, so as to improve the readability and modularity of the code. Another advantage of this approach is that in situations where we want to perform many partial group operations on many points, e.g. to compute a left linear combination
\[ \LTimes_i \pi_i^{\Ltimes n_i} \]
for some points~$\pi_i \in \Px_{x_i,y}$ and integers~$n_i \in \Z$, this leads to the alignment being performed once and for all at the beginning of the calculation, rather than having to perform a realignment at every step of the calculation. 

We will actually present two alignment procedures. The first one, called \emph{weak alignment}, can only manage two points of~$\Px$, and may fail (with low probability), whereas the second one, called \emph{strong alignment}, is guaranteed to eventually succeed and can handle arbitrarily many points, but is slower.

As explained in Remark~\ref{rk:OnlyLeft}, we limit ourselves to describing these procedures in the setting of \emph{left} partial group operations in~$\Px$, where our points have the same~$y$-coordinate, which are thus represented by divisors~$E_i$ which are all equivalent to each other, but not necessarily equal as divisors.

\begin{algorithm}[H]
\KwIn{Matrices~$M_{D}$,~$M_{E_1}$, and~$M_{E_2}$ representing the points~$x = [\Ld(D)]$,~$y_1 = [\Ld(E_1)]$, and~$y_2 = [\Ld(E_2)] \in J(K)$ where~$D, E_1, E_2 \in \Eff^{\delta}(C)$ and such that~$y_1=y_2$.}
\KwOut{The unique scalar~$\lambda \in \Kx$ such that~$[D,E_2] = \lambda \cdot [D,E_1]$, or \FAIL.}
\tcp{Using Algorithm~\ref{alg:MakEq}:}
Find~$c_{e_1},c_{e_2}$ where~$e_1,e_2 \in V_2$ are such that~$E_1+(e_1)_2 = E_2+(e_2)_2$\;
\tcp{Using Algorithm~\ref{alg:norm} (with~$n=2$) twice (may fail):}
$\displaystyle \lambda \la \left(\frac{e_1}{e_2}\right)(D) \cdot \left(\frac{e_2}{e_1}\right)(D_0)$\;
\caption{Attempt to align two points of~$\Px$}
\label{alg:WeakAlign}
\end{algorithm}

\begin{algorithm}[H]
\KwIn{Matrices~$M_{D_1},M_{E_1}, \cdots, M_{D_m}, M_{E_m}$ representing~$x_1 = [\Ld(D_1)], y_1 = [\Ld(E_1)], \cdots, x_m = [\Ld(D_m)], y_m = [\Ld(E_m)]$, where~$D_1,E_1, \cdots, D_m, E_m \in \Eff^{\delta}(C)$ and such that~$y_1=\cdots=y_m$.}
\KwOut{A matrix~$M_{E}$ representing the point~$y = [\Ld(E)]$ such that~$y=y_1=\cdots=y_m$, and the unique scalars~$\lambda_1,\cdots,\lambda_m \in \Kx$ such that~$[D_i,E_i] = \lambda_i \cdot [D_i,E]$ for all~$1 \le i \le m$.}
\Repeat{all calls to Algorithm~\ref{alg:norm} succeed at line~\ref{alg:StrongAlign:norms}}
{
\tcp{Pick random~$E \sim E_1$:}
Use Algorithm~\ref{alg:MovingLemma:J} to find~$M_{E},c_{s_1},c_{t_1}$ with~$s_1,t_1 \in V_2$ such that~$E+(s_1)_2 = E_1+(t_1)_2$\;
\tcp{Align the others~$E_i$ on~$E$:}
\For{$i=2$ \KwTo~$m$}
{
\tcp{Using Algorithm~\ref{alg:MakEq}:}
$(s_i,t_i) \la$ elements of~$V_2$ such that~$E+(s_i)_2 = E_i + (t_i)_2$\;
}
\For{$i=1$ \KwTo~$n$}
{
\tcp{Using Algorithm~\ref{alg:norm} (with~$n=2$) twice:}
$\displaystyle \lambda_i \la \left( \frac{s_i}{t_i} \right)(D_i) \cdot \left( \frac{t_i}{s_i} \right)(D_0)$\;\label{alg:StrongAlign:norms}
}
}
\Return~$(M_E,\lambda_1,\cdots,\lambda_m)$\;
\caption{Align~$m$ points of~$\Px$}
\label{alg:StrongAlign}
\end{algorithm}

Both Algorithms~\ref{alg:WeakAlign} and~\ref{alg:StrongAlign} are straightforward consequences of Lemma~\ref{lem:Poincare_compare}. The for loops in Algorithm~\ref{alg:StrongAlign} may, of course, be executed in parallel.

\subsection{Algorithms for aligned points on~$\Px$} \label{sect:alg_gp_Px}

We now show how to extend Algorithms~\ref{alg:MakAddFlip} and~\ref{alg:MakNeg} to compute the partial group laws of~$\Px$. Since we write these laws multiplicatively whereas the law of~$J$ is additive, and since we do not wish to concede to Makdisi the primacy on debatable terminology, we call these extensions MulFlip and Inversion, respectively. As explained in Remark~\ref{rk:OnlyLeft}, we only present our algorithms for the left partial group law~$\Ltimes$, since algorithms for the right partial group law~$\Rtimes$ can be obtained simply by swapping~$d_0$ with~$e_0$ and the~$D$'s with the~$E$'s. Furthermore, we can limit ourselves to aligned operations, thanks to the procedures described in Section~\ref{sect:align}.

In order to keep track of the linear equivalences, we suppose that Algorithm~\ref{alg:MakAddFlip} (respectively,~\ref{alg:MakNeg}) has been modified so as to also return the column vector~$c_s$ computed at line~\ref{alg:MakAddFlip:s} (respectively, at line~\ref{alg:MakNeg:s}).

\begin{algorithm}[H]
\KwIn{Matrices~$M_{D_1}$,~$M_{D_2}$, and~$M_{E}$ representing the points~$x_1 = [\Ld(D_1)]$,~$x_2 = [\Ld(D_2)]$, and~$y = [\Ld(E)] \in J(K)$ where~$D_1, D_2, E \in \Eff^{\delta}(C)$.}
\KwOut{A matrix~$M_{D_3}$ representing the point~$x_3 = [\Ld(D_3)]$ such that~$x_1+x_2+x_3=0$, and a scalar~$\lambda \in \Kx$ such that~$\left( [D_1,E] \Ltimes [D_2,E] \right)^{\Ltimes -1} = \lambda \cdot [D_3,E]$.}
\Repeat{Algorithm~\ref{alg:disjoint} verifies that~$D_3$ is disjoint from~$E$ and~$E_0$}{
$(M_{D_3},c_s) \la \AddFlip(M_{D_1},M_{D_2})$\tcp*{Using Algorithm~\ref{alg:MakAddFlip}, \\ so~$(s)_3=D_1+D_2+D_3$}
}
\tcp{Using Algorithm~\ref{alg:norm} (with~$n=3$) twice:}
$\displaystyle \lambda \la \left(\frac{s}{d_0^3}\right)(E_0) \cdot \left(\frac{d_0^3}{s}\right)(E)$\;
\Return~$(M_{D_3},\lambda)$\;
\caption{Left MulFlip~$\Px_{x_1,y} \times \Px_{x_2,y} \longrightarrow \Px_{-x_1-x_2,y}$}
\label{alg:PMulFlip}
\end{algorithm}
Thus, given two triples~$(M_{D_1},M_E,\lambda_1)$ and~$(M_{D_1},M_E,\lambda_1)$ encoding respectively the points~$\pi_1 = \lambda_1 \cdot [D_1,E] \in \Px_{x_1,y}$ and~$\pi_2 = \lambda_2 \cdot [D_2,E] \in \Px_{x_2,y}$, we can use Algorithm~\ref{alg:PMulFlip} to compute the representation~$(M_{D_3},M_E,\lambda/\lambda_1 \lambda_2)$ of~$(\pi_1 \Ltimes \pi_2)^{\Ltimes -1}$.

\begin{algorithm}[H]
\KwIn{Matrices~$M_{D}$ and~$M_{E}$ representing the points~$x = [\Ld(D)]$ and~$y = [\Ld(E)] \in J(K)$ where~$D, E \in \Eff^{\delta}(C)$.}
\KwOut{A matrix~$M_{D'}$ representing the point~$x' = [\Ld(D')]$ such that~$x+x'=0$, and a scalar~$\lambda \in \Kx$ such that~$[D,E]^{\Ltimes -1} = \lambda \cdot [D',E]$.}
\Repeat{Algorithm~\ref{alg:disjoint} verifies that~$D'$ is disjoint from~$E$ and~$E_0$}{
$(M_{D'},c_s) \la \Negation(M_{D})$\tcp*{Using Algorithm~\ref{alg:MakNeg}, \\ so~$(s)_2=D+D'$}
}
\tcp{Using Algorithm~\ref{alg:norm} (with~$n=2$) twice:}
$\displaystyle \lambda \la \left(\frac{s}{d_0^2}\right)(E_0) \cdot \left(\frac{d_0^2}{s}\right)(E)$\;
\Return~$(M_{D'},\lambda)$\;
\caption{Left inversion~$\Px_{x,y} \longrightarrow \Px_{-x,y}$}
\label{alg:PInv}
\end{algorithm}
Thus, given~$(M_{D},M_E,\mu)$ encoding the point~$\pi = \mu \cdot [D,E] \in \Px_{x,y}$, we can use Algorithm~\ref{alg:PInv} to compute the representation~$(M_{D'},M_E,\lambda/\mu)$ of~$\pi^{\Ltimes -1}$.

Obviously, the left partial group law can then be computed by using Algorithms~\ref{alg:PMulFlip} and then~\ref{alg:PInv}. However, combining these two algorithms into one allows us to achieve the same result with only one call to Algorithm~\ref{alg:norm}, which is interesting as Algorithm~\ref{alg:norm} can be computationally costly.

\begin{algorithm}[H]
\KwIn{Matrices~$M_{D_1}$,~$M_{D_2}$, and~$M_{E}$ representing the points~$x_1 = [\Ld(D_1)]$,~$x_2 = [\Ld(D_2)]$, and~$y = [\Ld(E)] \in J(K)$ where~$D_1, D_2, E \in \Eff^{\delta}(C)$.}
\KwOut{A matrix~$M_{D_4}$ representing the point~$x_4 = [\Ld(D_4)]$ such that~$x_4=x_1+x_2$, and a scalar~$\lambda \in \Kx$ such that~$[D_1,E] \Ltimes [D_2,E] = \lambda \cdot [D_4,E]$.}
\Repeat{Algorithm~\ref{alg:disjoint} verifies that~$D_3$ is disjoint from~$E$ and~$E_0$}{
$(M_{D_3},c_s) \la \AddFlip(M_{D_1},M_{D_2})$\tcp*{Using Algorithm~\ref{alg:MakAddFlip}, \\ so~$(s)_3=D_1+D_2+D_3$}
}
\Repeat{Algorithm~\ref{alg:disjoint} verifies that~$D_4$ is disjoint from~$E$ and~$E_0$}{
$(M_{D_4},c_t) \la \Negation(M_{D_3})$\tcp*{Using Algorithm~\ref{alg:MakNeg}, \\ so~$(t)_2=D_3+D_4$}
}
\tcp{Using Algorithm~\ref{alg:norm} ($n=3$) twice:}
$\displaystyle \lambda \la \left(\frac{s}{t d_0}\right)(E) \cdot \left(\frac{t d_0}{s}\right)(E_0)$\;
\Return~$(M_{D_4},\lambda)$\;
\caption{Left partial group law~$\Px_{x_1,y} \times \Px_{x_2,y} \longrightarrow \Px_{x_1+x_2,y}$}
\label{alg:Pmul}

\end{algorithm}
\begin{rk}
In Algorithms~\ref{alg:PMulFlip},~\ref{alg:PInv}, and~\ref{alg:Pmul}, the \textbf{repeat \dots until} loops ensure that the output still satisfies our golden rule~\eqref{eqn:PGoldRule}, which also ensures that the calls to Algorithm~\ref{alg:norm} are guaranteed to always succeed.
\end{rk}

\subsection{Fast exponentiation and (bi)linear combinations}

\subsubsection{Fast exponentiation}

Given a point~$\pi \in \Px_{x,y}$ and an integer~$n \in \Z$, it is easy to derive from Algorithms~\ref{alg:PMulFlip} and~\ref{alg:PInv} a \emph{fast left exponentiation} algorithm to compute~$\pi^{\Ltimes n} \in \Px_{nx,y}$ in~$O(\log \vert n \vert)$ steps, thanks to the concept of \emph{AddFlip chain} discussed in~\cite[2.2.1]{Hensel}. This is a better than using Algorithm~\ref{alg:Pmul} and standard addition chains, since Algorithm~\ref{alg:PMulFlip} is faster than Algorithm~\ref{alg:Pmul}. The same goes, of course, for the computation of~$\pi^{\Rtimes n} \in \Px_{x,ny}$.

\subsubsection{Linear combinations}

Suppose now that we are given several points~$\pi_i \in \Px_{x_i,y}$ for varying~$x_i \in J$ but fixed~$y \in J$, as well as integers~$n_i \in \Z$, and that we wish to compute the \emph{left linear combination}
\[ \LTimes_i \pi_i^{\Ltimes n_i} \in \Px_{\sum_i n_i x_i, y}. \]
It is clear that this is achievable with a combination of the algorithms that we have just described, but somewhat tricky to implement in a way that avoids unnecessary calls to Makdisi's algorithms (and, in particular, to Algorithms~\ref{alg:disjoint} and~\ref{alg:norm}). We therefore provide an optimised algorithm for this, limiting ourselves to the aligned case thanks to Algorithm~\ref{alg:StrongAlign}.

\begin{algorithm}[H]
\KwIn{Matrices~$M_{D_1},\cdots, M_{D_s},M_E$ representing the points~$x_1, \cdots, x_s, y \in J(K)$, and integers~$n_1\cdots,n_s \in \Z$.}
\KwOut{A matrix~$M_D$ representing~$\sum_{1 \le i \le s} n_i x_i \in J(K)$, and a scalar~$\lambda \in \Kx$ such that~$\LTimes_{1 \le i \le s} [D_i,E]^{\Ltimes n_i} = \lambda \cdot [D,E]$.}
Find~$(M_{D},\lambda)$ such that~$[D_1,E]^{\Ltimes (-1)^{s-1} TODO check n_1} = \lambda \cdot [\Delta_1,E]$\;
\For{$i=2$ \KwTo s}{
\tcp{Now~$\lambda \cdot [D,E]=\psi_{i-1}$}
$n'_i \la (-1)^{s+1-i} n_i$\; \label{alg:PLLC:n'_i}
\tcp{Using fast exponentiation:}
Find~$(M_{\Delta_i},\mu_i)$ such that~$[D_i,E]^{\Ltimes n'_i} = \mu_i \cdot [\Delta_i,E]$\;
$(M_{D},\mu'_i) \la \MulFlip(M_D,M_{\Delta_i},M_E)$\tcp*{Using Algorithm~\ref{alg:PMulFlip}}
$\displaystyle \lambda \la \frac{\mu'_i}{\mu_i \lambda}$\;
\tcp{Now~$\lambda \cdot [D,E]=\psi_{i}$}
}
\Return~$(M_D,\lambda)$\;
\caption{Left linear combination}
\label{alg:PLLC}
\end{algorithm}

\begin{proof}
For each~$1 \le i \le s$, define 
\[ \phi_i = \LTimes_{j \le i} [D_j,E]^{\Ltimes n_j} \qquad \text{and} \qquad \psi_i = \phi_i^{\Ltimes (-1)^{s-i}}. \]
Then~$\psi_1 = \phi_1^{\Ltimes (-1)^{s-1}} = [D_1,E]^{\Ltimes (-1)^{s-1}}$, and for~$2 \le i \le s$,
\begin{align*}
\psi_i &= \phi_i^{\Ltimes (-1)^{s-i}} = \left( \phi_{i-1} \Ltimes [D_i,E]^{\Ltimes n_i} \right)^{\Ltimes (-1)^{s-i}} \\
&=  \psi_{i-1}^{\Ltimes -1} \Ltimes [D_i,E]^{\Ltimes -(-1)^{s+1-i} n_i} = \MulFlip\big(\psi_{i-1}, [D_i,E]^{\Ltimes n'_i}\big),
\end{align*}
where~$n'_i = (-1)^{s+1-i} n_i$ is computed at line~\ref{alg:PLLC:n'_i}.

We thus see by induction that we have~$\lambda \cdot [D,E] = \psi_{i-1}$ at the beginning of each iteration the \textbf{for} loop, and~$\lambda \cdot [D,E] = \psi_{i}$ at the end of each iteration. Hence, when we exit the loop, we have obtained~$\psi_{s} = \phi_s$, which is what we were after.
\end{proof}

\subsubsection{Bilinear combinations}

Suppose now that we are in the situation where we are given
\begin{itemize}
\item~$r \in \N$ points~$x_1, \cdots, x_r \in J(K)$,
\item as many integers~$m_1,\cdots,m_r \in \Z$,
\item~$s \in \N$ more points~$y_1, \cdots, y_s \in J$,
\item as many extra integers~$n_1,\cdots,n_s \in \Z$,
\item and finally points~$\pi_{i,j} \in \Px$ for~$1 \le i \le r$ and~$1 \le j \le s$.
\end{itemize}
We can then define the \emph{bilinear combination} of the~$\pi_{i,j}$ with coefficients~$m_1,\cdots,m_r$ and~$n_1,\cdots,n_s$, which we suggestively denote by~${}^\trans m \pi n$, as
\[ \LTimes_{1 \le i \le r} \left( \RTimes_{1 \le j \le s} \pi_{i,j}^{\Rtimes n_j}\right)^{\Ltimes m_i} = \RTimes_{1 \le j \le s} \left( \LTimes_{1 \le i \le r} \pi_{i,j}^{\Ltimes m_i}\right)^{\Rtimes n_j} \in \Px_{\sum_i m_i x_i, \sum_j n_j y_j}, \]
the equality between these two formulas being a consequence of the biextension axioms~\cite[(2.5)]{EdiLido}.
In order to compute it algorithmically (using for example the first formula), we proceed as follows. 

\begin{algorithm}[H]
\KwIn{Integers~$m_1,\cdots,m_r,n_1,\cdots,n_s \in \Z$, and points~$\pi_{i,j} = (M_{D_{i,j}},M_{E_{i,j}},\lambda_{i,j})$ for~$1 \le i \le r$ and~$1 \le j \le s$ such that~$\pi_{i,j} \in \Px_{x_i,y_j}$ for some~$x_i, y_j \in J(K)$.}
\KwOut{The bilinear combination~${}^\trans m \pi n$.}
\For{$i=1$ \KwTo~$r$}{
Use Algorithm~\ref{alg:StrongAlign} (rightside variant) to find~$D'_i \in \EdC$ and~$\mu_1, \cdots, \mu_s \in \Kx$ such that~$[D_{i,j},E_{i,j}] = \mu_j \cdot [D'_i,E_{i,j}]$ for all~$1 \le j \le s$\;
Use Algorithm~\ref{alg:PLLC} (rightside variant) to find~$E'_i \in \EdC$ and~$\nu_i \in \Kx$ such that~$\RTimes_{1 \le j \le s} [D'_i,E_{i,j}]^{\Rtimes n_j} = \nu_i \cdot [D'_i,E'_i]$\;
$\nu_i \la \nu_i \prod_{1 \le j \le s} (\lambda_{i,j} \mu_j)^{n_j}$\; 
}
Use Algorithm~\ref{alg:StrongAlign} (leftside variant) to find~$E'' \in \EdC$ and~$\rho_1, \cdots, \rho_r \in \Kx$ such that~$[D'_i,E'_i] = \rho_i \cdot [D'_i,E'']$ for all~$1 \le i \le r$\;
Use Algorithm~\ref{alg:PLLC} (leftside variant) to find~$D'' \in \EdC$ and~$\sigma \in \Kx$ such that~$\LTimes_{1 \le i \le r} [D'_i,E'']^{\Ltimes m_i} = \sigma \cdot [D'',E'']$\;
\Return~$(M_{D''},M_{E''},\sigma \cdot \prod_{1 \le i \le r} (\nu_i \rho_i)^{m_i})$\;
\caption{Bilinear combination}
\label{alg:PBLC}
\end{algorithm}

\begin{rk}
The \textbf{for} loop in Algorithm~\ref{alg:PBLC} may be executed in parallel.
\end{rk}

\pagebreak

\section{Converting sums of points to Makdisi format}\label{sect:naive}

As we saw in Section~\ref{sect:MakRepPt}, in Makdisi's framework, a point on~$J$ is represented rather indirectly by a matrix encoding a subspace of the global section space~$V_2$ of~$\L^{\otimes 2}$. In contrast, we may well want to handle points of~$J$ provided to us as divisors on~$C$ of degree~$0$ and expressed as sums of points of~$C$. More generally, given two divisors~$D = \sum_i n_i P_i$,~$E = \sum_j m_j Q_j \in \Div^0(C)$ expressed as sums of points of~$C$, we may want to consider the point~$(D,E)$ of~$\Px$ (cf. notation~\eqref{eqn:(DE)}), or even the point
\begin{equation}
\Nm_D\left( \prod_j \big(t-t(Q_j)\big)^{-m_j} \right) \in \Px, \label{eqn:ptPnormt}
\end{equation}
for some rational function~$t$ on~$C$
, assuming of course that~$D$,~$E$, and~$t$ are such that these are well-defined elements of~$\Px$. For instance, this situation occurs in~\cite{QuadChab}. The purpose of this final section is to describe algorithms to convert such points of~$\Px$ into our format~\eqref{eqn:codepi}.

\subsection{Some extra assumptions: the vectors~$r_{n,P}$}

Since the points in the support of the divisors that we may face are arbitrary, we will assume from now on that for any point~$P \in C$, we are able to construct a local trivialisation~$\tau_P$ of~$\L$ at~$P$, and to compute a row vector~$r_{1,P} \in K^{\dim V_1}$ containing the values at~$P$ under~$\tau_P$ of the basis of the section space~$V_1$ of~$\L$ corresponding to the columns of the matrix~$M_{V_1}$. This means that we should retain some data describing this basis as we compute~$M_{V_1}$ for the first time during the initialisation of Makdisi's algorithms (cf. Section~\ref{sect:MakInit}).

\begin{ex}
In the situation described in Example~\ref{ex:OC(D)}, where~$C : f(x,y)=0$ is given to us by a (possibly singular) plane model and~$V_1$ is the Riemann-Roch space attached to some effective divisor~$D_0$, we should retain expressions in~$x,y$ for the basis of~$V_1$ used to calculate~$M_{V_1}$; for most~$P$, the row vector~$r_{1,P}$ can then be obtained simply by evaluating these expressions at~$P$ (some very manageable difficulties will occur if~$P$ corresponds to a singular point or to a point at infinity on the singular model~$f(x,y)=0$, and if~$P$ lies in the support of~$D_0$, then a trivialisation~$\tau_P$ must be constructed by multiplying by a suitable power of a local uniformiser at~$P$).
\end{ex}

\begin{ex}
In the situation described in Example~\ref{ex:Modular}, where~$C$ is a modular curve on which we represent points as elliptic curves equipped with the appropriate level structure, we should retain data describing the basis of~$V_1$ in terms of the modular forms~$\lambda_{v,w}$ mentioned in Example~\ref{ex:Modular}; for non-cuspidal~$P$, a trivialisation~$\tau_P$ can then be obtained by picking a Weierstrass model for the elliptic curve corresponding to~$P$, and when~$P$ is a cusp, a vector~$r_{1,P}$ can be obtained by computing the constant terms of~$q$-expansions at this cusp (cf.~\cite[Theorem 4.2.2]{MakMod} for explicit formulas).
\end{ex}

Furthermore, we will also assume that when we used the~$\DivAdd$ routine to compute the matrices~$M_{V_n}$ for~$n \le 5$, we retained the information required to express the elements of the corresponding basis of~$V_n$ as homogeneous polynomials of degree~$n$ in the elements of the basis of~$V_1$. It follows that we are actually able, given any point~$P \in C$, to compute for each~$n \le 5$ a row vector~$r_{n,P} \in K^{\dim V_n}$ containing the values under~$\tau_P$ of this basis of~$V_n$.

\begin{rk}
Since these vectors~$r_{n,P}$ depend on our choice of~$\tau_P$, they are actually only well-defined up to scaling. However, we will only use them for linear algebra calculations such as determining the hyperplane of~$V_n$ consisting of elements vanishing at~$P$, so this ambiguity will not matter. We also note that we will only require the~$r_{n,P}$ for~$n \le 4$, even though some calculations in Makdisi's framework take place in~$V_n$ for~$n$ as high as~$5$. 
\end{rk}

We will now explain how these row vectors~$r_{n,P}$ allow us to compute representations~\eqref{eqn:codepi} for points of~$\Px$ such as~\eqref{eqn:ptPnormt}. For the sake of simplicity, we limit ourselves to the case where the divisors~$D=P_1-P_2$ and~$E=Q_1-Q_2$ in~\eqref{eqn:ptPnormt} are differences of two points of~$C$; the general case can, of course, be handled thanks to the partial group laws algorithms described in Section~\ref{sect:Laws}.

We will also assume that the function~$t$ in~\eqref{eqn:ptPnormt} is a ratio~$t=v/w$ of sections~$v,w \in V_1$, so that the value~$t(P)$ of~$t$ at a point~$P \in C$ can be obtained from the vectors~$r_{1,P}$. Since~\eqref{eqn:bound_delta} ensures that~$\L$ is very ample, many ``interesting'' functions on~$C$ are of this form.

\subsection{Appetizer: Converting sums of points to~$J$}

It is instructive to begin with the simpler task of finding a Makdisi representation for the point~$[D] = [P_1-P_2]~$ of~$J$, given~$P_1, P_2 \in C$.

Recall from Section~\ref{sect:MakFramework} that~$\delta \defeq \deg \L$, and that we have fixed~$n_Z > 5 \delta$ points~$Z_1,\cdots,Z_{n_Z}$ on~$C$, and, given a matrix~$M$, denote by~$\Ker M$ a matrix whose columns form a basis of the right kernel of~$M$.

\begin{algorithm}[H]
\KwIn{Two points~$P_1,P_2$ of~$C$.}
\KwOut{A Makdisi representation of the point~$[P_1-P_2] \in J$.}
$I_\Delta \la$ a random subset of~$\{1,\cdots,n_Z\}$ of size~$\delta-1$\;\label{alg:convert_Mak:Delta}
\For{$j=1$ \KwTo~$2$}
{
$L_j \la$ the vertical concatenation of~$r_{2,P_j}$ and of the rows of~$M_{V_2}$ indexed by the~$i \in I_\Delta$\;
\If{$\rank L_j < \delta$}{Go back to line~\ref{alg:convert_Mak:Delta}\;\label{alg:convert_Mak:rank}}
$M_j \la M_{V_2} \times \Ker L_j$\;\label{alg:convert_Mak:MakSub}
}
Apply Makdisi's algorithm for subtraction in~$J$ to~$M_1$ and~$M_2$, and return the result\;
\caption{Putting points of~$J$ into Makdisi format}
\label{alg:convert_Mak}
\end{algorithm}

\begin{proof}
The subset~$I_\Delta$ corresponds to a divisor~$\Delta = \sum_{i \in I_\Delta} Z_i \in \Eff^{\delta-1}(C)$, which we ensure at line~\ref{alg:convert_Mak:rank} does not contain~$P_1$ nor~$P_2$. Therefore the matrices~$M_j$ represent the subspaces~$V_2(-P_j-\Delta)$ of~$V_2$, and thus encode valid points~$[\L^\dual(P_j+\Delta)]$ of~$J$. Subtracting them at line~\ref{alg:convert_Mak:MakSub} discards~$\L^\dual(\Delta)$ and retains~$[P_1-P_2]$. 
\end{proof}

\subsection{Converting sums of points to~$\Px$}

The final step of Algorithm~\ref{alg:convert_Mak} involves group operations in~$J$, and therefore some linear equivalences. In order to extend it to~$\Px$, it is thus necessary to retain information about these linear equivalences. We therefore begin by presenting a variant of Algorithm~\ref{alg:convert_Mak} where this group operation is dissected. As Algorithm~\ref{alg:convert_Mak}, it relies on a divisor~$\Delta \in \Eff^{\delta-1}(C)$ supported by some of the points~$Z_i$.

\begin{algorithm}[H]
\KwIn{Two points~$P_1,P_2$ of~$C$, and a matrix~$S_\Delta$ such that~$M_{V_2(-\Delta)} = M_{V_2} \times S_\Delta$ for some~$\Delta \in \Eff^{\delta-1}(C)$.}
\KwOut{Columns~$c_{s_2}, c_{s_3} \in K^{n_Z}$ representing sections~$0 \neq s_2 \in V_2$ and~$0 \neq s_3 \in V_3$, and a matrix~$M_D$ encoding~$D \in \EdC$ such that~${P_1}-{P_2} = D+(s_2)_2 - (s_3)_3 \in \Div(C)$.}
$M_{V_1(-{P_1})} \la M_{V_1} \times \Ker r_{1,{P_1}}$\tcp*{Represents~$V_1(-{P_1})$}
$M_{V_1(-{P_2})} \la M_{V_1} \times \Ker r_{1,{P_2}}$\tcp*{Represents~$V_1(-{P_2})$}
$r_{V_2(-\Delta) @ {P_1}} \la r_{2,{P_1}} \times S_\Delta$\tcp*{Values of basis of~$V_2(-\Delta)$ at~${P_1}$}
\If{$r_{V_2(-\Delta) @ {P_1}} = 0$}{\FAIL\tcp*{${P_1}$ meets~$\Delta$}}
$z \la$ random nonzero column vector in~$\Ker r_{V_2(-\Delta) @ {P_1}}$\;
$c_{s_2} \la M_{V_2} \times S_\Delta \times z$\tcp*{$(s_2)_2={P_1}+\Delta+E$,~$E \in \EdC$}
$M_{s_2 V_2} \la \FnMul(c_{s_2},M_{V_2})$\tcp*{$s_2 \cdot V_2 = V_4(-{P_1}-\Delta-E)$}
$M_{V_3(-\Delta-E)} \la \DivSub( M_{s_2 V_2}, M_{V_1(-{P_1})})$\tcp*{Represents~$V_3(-\Delta-E)$}
$M \la$ matrix such that~$M_{V_3(-\Delta-E)} = M_{V_3} \times M$\tcp*{Express subspace~$V_3(-\Delta-E)$ on basis of~$V_3$}
$r_{V_3(-\Delta-E) @ {P_2}} \la r_{3,{P_2}} \times M$\tcp*{Values of~$V_3(-\Delta-E)$ at~${P_2}$}
\If{$r_{V_3(-\Delta-E) @ {P_2}} = 0$}{\FAIL\tcp*{${P_2}$ meets~$\Delta+E$}}
$z \la$ random nonzero column vector in~$\Ker r_{V_3(-\Delta-E) @ {P_2}}$\;
$c_{s_3} \la M_{V_3(-\Delta-E)} \times z$\tcp*{$(s_3)_3={P_2}+\Delta+E+D$,~$D \in \EdC$}
$M \la \FnMul(c_{s_3},M_{V_2})$\tcp*{$s_3 \cdot V_2 = V_5(-{P_2}-\Delta-E-D)$}
$M \la \DivSub(M,M_{V_3(-\Delta-E)})$\tcp*{Represents~$V_2(-{P_2}-D)$}
$M \la \DivAdd(M,M_{V_1})$\tcp*{Represents~$V_3(-{P_2}-D)$}
$M_D \la \DivSub(M,M_{V_1(-{P_2})})$\tcp*{Represents~$V_2(-D)$}
\Return~$(c_{s_2},c_{s_3},M_D)$\;
\caption{Variant of Algorithm~\ref{alg:convert_Mak}}
\label{alg:Push1}
\end{algorithm}

\begin{proof}
Indeed,~$D = (s_3)_3 - (s_2)_2 +P_1-{P_2}$.
\end{proof}

%
%

Another difficulty is that in the setting described in Remark~\ref{rk:kpadic} where~$K$ is a~$p$-adic ring, we may face the situation where some of the points~$P_1,P_2,Q_1,Q_2$ are distinct but congruent mod~$p$. A naive evaluation of the function~$t$ in these circumstances would lead to performing~$p$-adic divisions that reduce to~$0/0 \bmod p$, which would thus incur loss of~$p$-adic accuracy. The following algorithm avoids this problem, by reexpressing functions on~$C$ as another ratio of sections of powers of~$\L$ and thereby solving such ``indeterminate forms''. The author thanks Davide Lombardo and Guido Lido for their help with inventing this trick.

\begin{algorithm}[H]
\KwIn{Integers~$l,r \le 4$ such that~$l +r > 5$, column vectors~$c_a, c_b$ representing sections~$0 \neq a \in V_{5-l}$,~$0 \neq b \in V_{5-r}$, and row vectors~$r_{n,P}$ and~$r_{n,Q}$ for~$n \le 4$ encoding two points~$P,Q \in C(K)$ such that~$\left(\frac{a}b \right)(P) / \left(\frac{a}b \right)(Q) \in \Kx$.}
\KwOut{The value~$\left(\frac{a}b \right)(P) / \left(\frac{a}b \right)(Q) \in \Kx$.}
$d_l \la l \delta + 1 - g$\tcp*{$\dim V_l$}
$d_r \la r \delta + 1 - g$\tcp*{$\dim V_r$}
$M_l \la$ matrix whose columns are~$\Set{c_a \odot c}{c \text{ column of } M_{V_l}}$\; 
$M_r \la$ matrix whose columns are~$\Set{c_b \odot c}{c \text{ column of } M_{V_r}}$\;
$M \la$ horizontal concatenation~$(M_l \vert M_r)$\;
$z \la$ a random nonzero column vector in~$\Ker M$\tcp*{$z \in K^{d_l + d_r}$}
$c_x \la$ vector formed of the first~$d_l$ coordinates of~$z$\;
$c_y \la$ vector formed of the last~$d_r$ coordinates of~$z$\;
$x_P \la r_{l,P} \times c_x$\;
$x_Q \la r_{l,Q} \times c_x$\;
$y_P \la r_{r,P} \times c_y$\;
$y_Q \la r_{r,Q} \times c_y$\;
\If{any of~$x_P,x_Q,y_P,y_Q$ vanishes}{\FAIL}
\Return~$\displaystyle \frac{y_P x_Q}{x_P y_Q}$\;
\caption{Solve indeterminate forms}
\label{alg:LinAlg}
\end{algorithm}

\begin{proof}
The matrices~$M_l$ and~$M_r$ represent respectively the linear maps
\[ \setmap{V_l}{V_5}{x}{a \cdot x} \quad \text{ and } \quad \setmap{V_r}{V_5}{y}{b \cdot y}, \]
so the matrix~$M$ represents the linear map
\[ \setmap{V_l \times V_r}{V_5}{(x,y)}{a \cdot x+b \cdot y}, \]
which cannot be injective since we have assumed that~$g \ge 1$ and~$l+r>5$. Therefore the column vectors~$c_x$ and~$c_y$ exist, and represent nonzero sections~$x \in V_l$ and~$y \in V_r$ such that~$a \cdot x+b \cdot y=0$, whence~$\left(\frac{a}b \right)(P) / \left(\frac{a}b \right)(Q) = \left(\frac{y}x \right)(P) / \left(\frac{y}x \right)(Q)$.
\end{proof}

%

By combining Algorithms~\ref{alg:Push1} and~\ref{alg:LinAlg}, we finally achieve an algorithm to convert points such as~\eqref{eqn:ptPnormt} into our representation~\eqref{eqn:codepi}, and which, when~$K$ is a~$p$-adic ring, manages not to lose~$p$-adic accuracy even when some of the points~$P_1,P_2,Q_1,Q_2$ are congruent mod~$p$ (assuming~\eqref{eqn:ptPnormt} does represent a well-defined point of~$\Px$):

\newpage

\begin{algorithm}[H]
\KwIn{Row vectors~$(r_{n,P_1})$,~$(r_{n,P_2})$,~$(r_{n,Q_1})$, and~$(r_{n,Q_2})$ for~$n \le 4$ representing four points~$P_1,P_2,Q_1,Q_2 \in C(K)$, and two vectors~$c_v,c_w \in V_1$ encoding the function~$t=v/w \in K(C)$.}
\KwOut{$\Nm_{P_1-P_2}\left( \frac{t-t(Q_2)}{t-t(Q_1)} \in \O_C(Q_1-Q_2) \right) \in \Px_{P_1-P_2,Q_1-Q_2}$.}
Use~$r_{1,Q_1}$ and~$r_{1,Q_2}$ to evaluate~$t(Q_1) \la \frac{v(Q_1)}{w(Q_1)}$ and~$t(Q_2) \la \frac{v(Q_2)}{w(Q_2)}$\;

\Repeat{lines~\ref{alg:Push:1},~\ref{alg:Push:mu}, and~\ref{alg:Push:norms} all succeed}{
\tcp{Pick random~$\Delta = \sum_{i \in I_\Delta} Z_i \in \Eff^{\delta-1}(C):$}
$I_\Delta \la$ random subset  of size~$\delta-1$ of~$\{ 1, \cdots, n_Z \}$\;
$M_{V_{2,\Delta}} \la$ matrix formed of the rows of~$M_{V_2}$ indexed by~$\Delta$\;
$S_\Delta \la \Ker M_{V_{2,\Delta}}$\tcp*{$M_{V_2(-\Delta)} = M_{V_2} \times S_\Delta$}


\tcp{Using Algorithm~\ref{alg:Push1} twice:}
Find~$c_{d_2}, c_{d_3}, M_{D}$ and~$c_{e_2}, c_{e_3}, M_{E}$, where~$d_2, e_2 \in V_2$,~$d_3, e_3 \in V_3$, and~$D,E \in \EdC$ are such that~$P_1-P_2 = D+(d_2)_2 - (d_3)_3$ and~$Q_1-Q_2 = E+(e_2)_2 - (e_3)_3$\;\label{alg:Push:1}


\tcp{Using Algorithm~\ref{alg:LinAlg} twice,}
\tcp{with~$l=3$,~$r=4$,~$c_a=c_{e_2}$, and~$c_b=c_{v} - t(Q_1) c_{w}$,}
\tcp{and then~$l=4$,~$r=2$,~$c_a=c_{v} - t(Q_2) c_{w}$, and~$c_b=c_{e_3}$:}
$\mu \la \left( \frac{e_2}{v - t(Q_1) w} \right) (P_1-P_2) \cdot \left( \frac{v - t(Q_2) w}{e_3} \right) (P_1-P_2)$\;\label{alg:Push:mu}


\tcp{Pick random~$s \in V_1$ such that~$s(P_1),s(P_2) \in \Kx$:}
\Repeat{$s_{P_1} \in \Kx$ and~$s_{P_2} \in \Kx$}
{
$c \la$ random column vector in~$K^{\dim V_1}$\;
$c_s \la M_{V_1} \times c$\tcp*{Represents random~$s \in V_1$}\label{alg:Push:s}
$s_{P_1} \la r_{1,P_1}\times c$\tcp*{$s(P_1)$}
$s_{P_2} \la r_{1,P_2} \times c$\tcp*{$s(P_2)$}
}
$\lambda_s \la s_{P_1} / s_{P_2}$\tcp*{$s(P_1)/s(P_2)$} \label{alg:Push:endt}


$M_{(s)_1} \la \FnMul(c_s,M_{V_1})$\tcp*{Represents~$s \cdot V_1 = V_2(-(s)_1)$}
$c_{d_0 d_2} \la c_{d_0} \odot c_{d_2}$\;
\tcp{Using Algorithm~\ref{alg:norm} ($n=3$ twice, then~$n=1$ twice):}
$\displaystyle \nu \la \left( \frac{d_2 d_0}{d_3} \right)(E) \cdot \left( \frac{d_3}{d_2 d_0} \right)((s)_1) \cdot \left( \frac{s}{e_0} \right)(D_0) \cdot \left(\frac{e_0}{s} \right)(D)$\;\label{alg:Push:norms}
}
$\lambda \la \mu \cdot \lambda_s \cdot \nu$\; \label{alg:Push:lambda}
\Return~$(M_D,M_E,\lambda)$\;
\caption{Converting sums of points to~$\Px$}
\label{alg:Push}
\end{algorithm}

\begin{proof}
Suppose first that~$P_1,P_2,Q_1,Q_2$, and the random objects such as~$d_2,D,s,\cdots$ generated along the algorithm are generic enough that none of the calculations below involve terms of the form~$0/0$.

We have
\[ P_1-P_2 = D+(d_2)_2-(d_3)_3 = D-D_0+(d_2 d_0 / d_3) \]
and similarly
\[ Q_1-Q_2 = E-E_0+(e_2 e_0 / e_3) \]
where~$d_2 d_0 / d_3$ and~$e _2 e_0 / e_3$ are rational functions on~$C$ since~$d_0,e_0 \in V_1$. Therefore, Lemma~\ref{lem:Poincare_compare_0} informs us that
\[ (P_1-P_2,Q_1-Q_2) = \left( \frac{d_2 d_0}{d_3} \right)(E-E_0) \cdot \left( \frac{e_2 e_0}{e_3} \right)(P_1-P_2) \cdot [D,E] \]
in the notation of~\eqref{eqn:(DE)} and~\eqref{eqn:[DE]}.

As a consequence, the point that we are trying to compute is
\begin{align*}
& \Nm_{P_1-P_2}\left(\frac{t-t(Q_2)}{t-t(Q_1)} \in \O_C(Q_1-Q_2) \right) \\
=& \left(\frac{t-t(Q_2)}{t-t(Q_1)}\right)(P_1-P_2) \cdot (P_1-P_2,Q_1-Q_2) \\
=& \lambda \cdot [D,E],
\end{align*}
where, since~$t=v/w$,
\begin{align*}
\lambda &= \left(\frac{v-t(Q_2) w}{v-t(Q_1) w}\right)(P_1-P_2) \cdot \left( \frac{d_2 d_0}{d_3} \right)(E-E_0) \cdot \left( \frac{e_2 e_0}{e_3} \right)(P_1-P_2) \\
&= \left(\frac{v-t(Q_2) w}{e_3} \big/ \frac{e_2}{v-t(Q_1) w}\right)(P_1-P_2) \cdot \left( \frac{d_2 d_0}{d_3} \right)(E-E_0) \cdot e_0(P_1-P_2) \\
&= \mu \cdot \left( \frac{d_2 d_0}{d_3} \right)(E-E_0) \cdot e_0(P_1-P_2).
\end{align*}
Even under our genericity assumption, the term~$e_0(P_1-P_2)$ may not be defined, since~$e_0$ was fixed at the construction of~$\Px$ and because the points~$P_1$ and~$P_2$ are imposed to us by the user. This is why we do not stop here, but instead introduce an extra random section~$s \in V_1$ at line~\ref{alg:Push:s}. We then have
\begin{align*}
\lambda &= \mu \cdot \left( \frac{d_2 d_0}{d_3} \right)(E-(s)_1) \cdot \left( \frac{d_2 d_0}{d_3} \right)(\overbrace{(s)_1-E_0}^{(s/e_0)}) \cdot e_0(P_1-P_2) \\
&= \mu \cdot \left( \frac{d_2 d_0}{d_3} \right)(E-(s)_1) \cdot \left( \frac{s }{e_0} \right)(\underbrace{P_1-P_2+D_0-D}_{(d_2 d_0/d_3)}) \cdot e_0(P_1-P_2)
\end{align*}
by Weil reciprocity, whence finally
\[ \lambda = \mu \cdot \underbrace{\left( \frac{d_2 d_0}{d_3} \right)(E-(s)_1) \cdot \left( \frac{s}{e_0} \right)(D_0-D)}_{\nu} \cdot \underbrace{s(P_1-P_2)}_{\lambda_s}, \]
which shows that Algorithm~\ref{alg:Push} is correct under the genericity assumption. By Zariski continuity, it is therefore correct whenever it succeeds, whence the outer \textbf{repeat~$\cdots$ until} loop.
\end{proof}

\begin{rk}
A similar (but slightly simpler) calculation shows that it is easy to alter Algorithm~\ref{alg:Push} so as to compute the point
\[ (P_1-P_2,Q_1-Q_2) \in \Px \]
(without reference to any function~$t$), by replacing lines~\ref{alg:Push:mu}--\ref{alg:Push:endt} with the following two lines:~$c_s \la$ random vector in the column span of~$M_{V_1}$, and then use Algorithm~\ref{alg:LinAlg} with~$l=r=2$,~$c_a = c_s \odot c_{e_2}$, and~$c_b = c_{e_3}$ to attempt to evaluate~$\displaystyle \mu \la \left( s e_2 / e_3 \right) (P_1-P_2)$; and finally, at line~\ref{alg:Push:lambda}, redefine~$\lambda \la \mu \cdot \nu$ (as~$\lambda_s$ is no longer defined).
\end{rk}

\end{document}